\newcommand{\cadlag}{c\`adl\`ag }
\newcommand{\eq}[1]{\begin{align}#1\end{align}}
\newcommand{\ba}{\begin{eqnarray}}
\newcommand{\ea}{\end{eqnarray}}
\newcommand{\epsi}{\varepsilon}
\def\d{{\rm d}}
\newtheorem{theorem}{Theorem}
\numberwithin{theorem}{section}
\newtheorem{lemma}[theorem]{Lemma}
\newtheorem{corollary}[theorem]{Corollary}
\newtheorem{proposition}[theorem]{Proposition}
\newtheorem*{theorem*}{Theorem}
\newtheorem*{lemma*}{Lemma}
\newtheorem*{corollary*}{Corollary}
\newtheorem*{proposition*}{Proposition}
\newtheorem*{problem*}{Problem}
\newtheorem*{conjecture*}{Conjecture}
\theoremstyle{definition}
\newtheorem{definition}[theorem]{Definition}
\newtheorem{hyp}{Assumption}
\numberwithin{hyp}{section}
\newtheorem{claim}{Property}
\newtheorem{example}[theorem]{Example}
\theoremstyle{remark}
\newtheorem{remark}[theorem]{Remark}
\newcommand{\Rr}{{\mathbb{R}}}
\begin{document}

	\title{Rough differential equations with\\
		path-dependent coefficients}

	\author{Anna Ananova\\ Mathematical Institute\\
		University of Oxford.\\
		{\tt Anna.Ananova@maths.ox.ac.uk}
	}

	\date{}

	\maketitle
	
	\begin{abstract}
		We establish the existence of solutions to path-dependent rough differential equations with non-anticipative coefficients. Regularity assumptions on the coefficients are formulated in terms of horizontal and vertical derivatives.
	\end{abstract}

	\tableofcontents

	\section{Introduction}
	
	The theory of rough paths \cite{lyons} provides a  framework for defining  solutions to differential equations driven by a rough signal:
	\begin{equation} dY(t)= V(Y(t)) dX(t)\qquad X(0)=x\in \mathbb{R}^d \label{eq.ODE}\end{equation}
	where $V$ is a smooth vector field and $X, Y$ are continuous, but non-smooth functions whose lack of regularity prevents an interpretation of \eqref{eq.ODE} in terms of Riemann-Stieltjes or Young integration. A key insight of T. Lyons  \cite{lyons98} was to supplement the signal $X$ with a {\it rough path} tensor $\mathbb{X}$ constructed above $X$ such that one can construct an integration theory for \eqref{eq.ODE} with respect to the enriched path $(X,\mathbb{X})$. One of the main results of the theory is that \eqref{eq.ODE} maybe then interpreted as a `rough differential equation' (RDE) using this notion of rough integration.
	
	Since the pioneering work of Lyons \cite{lyons98,lyons2007}, the study of such rough differential equations (RDEs) has developed in various directions.
	Solutions to rough differential equations have been constructed as limits of discrete approximations \cite{davie2008},
	fixed points of Picard iterations  \cite{lyons98,gubinelli2004,lyons2007,frizhairer} or 
	limits of solutions of certain ODEs \cite{bailleul2015,frizvictoir}.
	An essential technique underlying many results is  Picard iteration in the Banach space of
	controlled paths, introduced by Gubinelli \cite{gubinelli2004} in the case of   H\H older regularity $\alpha\in (1/3,1/2]$ and extended to the case of arbitrary regularity in \cite{gubinelli2010,hairer2014}.

	As shown by Lyons \cite{lyons98}, for stochastic differential equations driven by Brownian motion,  probabilistic (Stratonovich) solutions coincide with RDE solutions constructed for an appropriate choice of Brownian rough path, showing that the theory of RDEs is also relevant for the study of stochastic differential equations (SDEs). 
	SDEs with path-dependent features arise in many problems in stochastic analysis and stochastic modeling \cite{ItoNis, Kush, Moh}, and this natural link with RDEs has inspired several studies on rough differential equations with path-dependent features which echo examples of path-dependent SDEs encountered in stochastic models \cite{aida2019,bailleul2014,deya2019,neuenkirch2008}. 
	
	
	A classical technique used in the study of path-dependent stochastic equations is to lift them to an infinite-dimensional SDE in the space of paths \cite{Moh}. This approach has been adapted to RDEs in Banach spaces by Bailleul \cite{bailleul2014} but requires Fr\'echet differentiability of the vector fields (coefficients), an assumption which excludes many examples.  Neuenkirch et al. \cite{neuenkirch2008} show existence and uniqueness for RDEs with delay;  
	Aida \cite{aida2019} and Deya et al. \cite{deya2019} study a class of  RDEs with path-dependent bounded variation terms, motivated by reflected SDEs. Although these examples may be represented as Banach space-valued RDEs, the functional coefficients involved fail to have sufficient regularity to apply the Banach space approach  \cite{bailleul2014}, and the results in \cite{aida2019,deya2019,neuenkirch2008} are specific to the class of equations considered.

	In this study, we complement these results by revisiting the existence of solutions for a class of path-dependent RDEs using a weaker notion of regularity, based on the non-anticipative functional calculus introduced in \cite{CF09,cont2012, D09}. This functional calculus is based on certain directional derivatives and does not require Fr\'echet differentiability, covering a larger class of examples of ODEs and SDEs with path-dependent coefficients  \cite{contkalinin2019}.
	
	We consider path-dependent rough  differential equations (RDEs) whose coefficients  are non-anticipative functionals \begin{equation}
	\begin{cases}
	dY(s)= b(s, Y_s)ds+\sigma(s,Y_s)dX (s),\label{eq.RDE}\\
	Y_{t_0}=\xi_{t_0}.
	\end{cases}
	\end{equation}
	where $\bm{X}=(X,\mathbb{X})$ is a $p$-variation rough path with $p\in[2,3)$ and $b,\sigma$ are non-anticipative functionals allowing for dependence on the (stopped) path $Y_s=Y(s\wedge .)$.
	We define regularity conditions on the coefficients in terms of the existence and continuity of functional derivatives in the sense of Dupire \cite{cont2012, D09}; these conditions are much weaker than Fr\'echet differentiability and only involve certain directional derivatives.
	
	As in \cite{frizhairer}, a solution of \eqref{eq.RDE} is defined as a controlled path $(Y, Y')$ such that $Y'(s)=\sigma(s,Y_s)$ and 
	\[
	Y(t)=\xi_{t_0}\,+\, \int_{t_0}^t b(s, Y)\, \d s\,+\,\int_{t_0}^t \sigma(s, Y) d\bm{X}.
	\]
	where the second integral is a rough integral.
	Our main result is an existence theorem  (Theorem \ref{chp3theorem:existence}) for solutions to  \eqref{eq.RDE}.
	Detailed definitions, assumptions on coefficients, and precise statements of results are presented below.
	The proof is based on an adaptation of the proof of  Peano's existence theorem 
	\cite{peano} to this setting and a fixed point argument  for the map
	\[
	(Y, Y')\mapsto (\xi_0+\int_0^{\cdot} b(t, Y)dt+\int_0^{\cdot} \sigma(\cdot, Y) d\bm{X} ,\, \sigma(\cdot, Y) ).
	\]
	The main difficulty is to obtain estimates on this map, given the path-dependence in the coefficients.
	
	\paragraph{Outline}  Section \ref{chp:introrough} provides an overview of rough path theory and controlled paths, and recalls the definition of the rough integral and its basic properties. In Section \ref{chp:actions}, we prove several  results on the action of regular functionals on rough paths and controlled paths: Lemmas \ref{chp3theorem:controll}, \ref{chp3theorem:actiononpaths} and Theorem \ref{chp3theorem:roughpathint}. Finally, section \ref{chp:existence} presents the setting of the problem and our main result on the existence of  solutions to path-dependent RDEs (Theorem \ref{chp3theorem:existence}).
	
	\vskip0.5cm
	{\small {\bf Acknowledgements.}
		We thank Rama Cont for fruitful discussions and valuable suggestions that helped to improve the article.}
	
	\section{Rough paths and rough integration}
	\label{chp:introrough}

	We begin by recalling some concepts from the theory of rough paths \cite{frizhairer, lyons98, lyons2007}. We will focus on the simplest of continuous paths $X$ with finite $p$-variation, for $p\in [2, 3)$.

	\begin{definition}[{p-variation paths}]
		We denote by $C^{p-var}([0,T],\Rr^d)$ the set of continuous paths $X\in C([0,T];\Rr^d)$, such that
		\[
		\|X\|_{p, [0,T]}:=
		\left(\sup_{\pi\in\mathcal{P}([0,T])}\sum_{t_k\in \pi} |X(t_{k+1})-X(t_k)|^p\right)^{\frac 1 p}<+\infty.
		\]
		where the supremum is taken over the set   $\mathcal{P}([0,T])$ of all partitions of the interval $[0,T].$ 
		Similarly for functions of two variables $R_{\cdot , \cdot}\colon[0,T]^2\to \Rr^d$, we define
		\[
		\|R\|_{p, [0,T]}=\left(\sup_{\pi\in\mathcal{P}([0,T])}\sum_{t_k\in \pi} |R_{t_k, t_{k+1}}|^p\right)^{\frac 1 p}.
		\]
	\end{definition}
	We denote by $V_p(X;t,s)$ the $p$-variation of the path ${X\in}C^{p-var}([0,T],\Rr^d)$ on the interval $[t,s]$:
	\[
	V_p(X;t,s):=\|X\|^p_{p, [t,s]}.
	\]
	One obviously has
	\ba\label{chp2ieq:varholder}
	|X(s)-X(t)|^p\leq V_p(X; t, s).
	\ea
	and $V_p$ is superadditive:
	\ba\label{chp2ieq:varadditive}
	V_p(X; t,u)+V_p(X; u,s)\leq V_p(X;t,s),\, \forall t\leq u\leq s.
	\ea
	As a consequence the function $V_p(X; 0, \cdot)$ is increasing  and continuous.
	
	The above motivates the notion of a superadditive function on the set of the intervals:
	\begin{definition}[Superadditive interval functions]
		A map $$\omega\colon \{ [t,s]\colon 0\leq t\leq s\leq T \}\to \Rr_+,$$ with $\omega[t,t]=0, \forall t\in[0,T]$ is called superadditive if for all $t\leq u\leq s$ in $[0, T]$
		\[
		\omega([t,u])+\omega([u,s])\leq \omega([t,s]).
		\]
	\end{definition}
	A basic example of a superadditive function is $\omega([t,s])=\|X\|^p_{p,[t,s]}$ for any $X\in C^{p-var}([0,T],\Rr^d).$
	A very useful fact about superadditive functions, which will be used in the paper, is that for $\omega_1, \omega_2$ superadditive, so are  $\omega_1^{r}$ and $\omega_1^{\theta}\omega_2^{1-\theta},$ for all $r\geq 1,\, \theta\in (0,1).$
	
	The notion of superadditive functions allows us to formulate an alternative definition of the space of $p$-variation paths:
	\begin{proposition*}
		$X\in C^{p-var}([0,T],\Rr^d)$ if and only if there exists a superadditive function $\omega$ such that $$|X(s)-X(t)|\leq \omega([t,s])^{\frac 1 p},\, \forall t\leq s\in[0,T].$$
	\end{proposition*}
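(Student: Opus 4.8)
The plan is to prove the equivalence by treating the two implications separately. The forward direction will be immediate from the facts already recorded above, while the converse will follow from a short telescoping argument exploiting superadditivity.

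For the implication ``$X\in C^{p-var}([0,T],\Rr^d)\Rightarrow$ existence of $\omega$'', I would simply take $\omega([t,s]):=V_p(X;t,s)=\|X\|^p_{p,[t,s]}$. By \eqref{chp2ieq:varadditive} this interval function is superadditive (and $\omega[t,t]=0$), while \eqref{chp2ieq:varholder} gives $|X(s)-X(t)|^p\le V_p(X;t,s)=\omega([t,s])$, which is exactly the claimed bound after taking $p$-th roots.

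For the converse, suppose $\omega$ is superadditive and $|X(s)-X(t)|\le\omega([t,s])^{1/p}$ for all $t\le s$. First I would upgrade the defining two-interval inequality to arbitrary finite partitions: for $\pi=\{0=t_0<\dots<t_N=T\}$, an induction on $N$ based on $\omega([t_0,t_{N-1}])+\omega([t_{N-1},t_N])\le\omega([0,T])$ yields $\sum_{k}\omega([t_k,t_{k+1}])\le\omega([0,T])$. Combining this with the pointwise bound gives, for every partition $\pi$,
\[
\sum_{t_k\in\pi}|X(t_{k+1})-X(t_k)|^p\le\sum_{t_k\in\pi}\omega([t_k,t_{k+1}])\le\omega([0,T]),
\]
so that $\|X\|^p_{p,[0,T]}\le\omega([0,T])<\infty$; since $X$ is continuous by hypothesis, this places $X$ in $C^{p-var}([0,T],\Rr^d)$.

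I do not anticipate a genuine obstacle: the proposition is essentially a reformulation of the definition of finite $p$-variation. The only step requiring (minor) attention is the induction that extends superadditivity from two subintervals to an arbitrary partition; once that is in place, both directions reduce to \eqref{chp2ieq:varholder}, \eqref{chp2ieq:varadditive} and the definition of $\|\cdot\|_{p,[0,T]}$.
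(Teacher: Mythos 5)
Your proof is correct. The paper states this proposition without proof, treating it as an elementary reformulation of the definition of finite $p$-variation, and your argument is exactly the standard one the reader is expected to supply: take $\omega([t,s])=V_p(X;t,s)$ for the forward direction (using \eqref{chp2ieq:varholder}--\eqref{chp2ieq:varadditive}), and for the converse extend the two-interval superadditivity to arbitrary partitions by induction and sum the pointwise bounds. You are also right to flag that continuity of $X$ must be assumed for the converse, since the bound $|X(s)-X(t)|\le\omega([t,s])^{1/p}$ alone does not force continuity unless $\omega$ vanishes continuously on the diagonal; the proposition is implicitly about continuous paths, as the notation $C^{p-var}$ and the surrounding text make clear.
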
 
	The above definition is closer to the definition of H\H older continuous paths, and corresponds to the latter in the case $\omega([t,s])=|s-t|.$

	We now define the space of rough paths (see e.g. \cite{FritzShekhar}[Sec. 1.2.4]):
	\begin{definition}[Space of $p$-rough paths]\label{def.roughpath}
		For $p\in[2, 3)$ we define the space $\mathcal{C}^{p-var}([0,T],\Rr^d) $ of continuous $p$-rough paths as the set of pairs $\mathbf{X}:=(X,\mathbb{X})$ of {$\Rr^d\times\Rr^{d\times d}$-valued} paths such that 
		\begin{enumerate}
			\item[(i)]     \[
			\mathbb{X}_{t,u}-\mathbb{X}_{t,s}-\mathbb{X}_{s,u}=X_{t,s}\otimes X_{s,u},\quad \forall  t,s,u\in[0,T].
			\] 
			\item[(ii)]
			\[
			\|X\|_{p, [0,T]}+ \|\mathbb{X}\|_{{\frac p 2}, [0,T]}<+\infty.
			\]
		\end{enumerate}
		
	\end{definition}
	As shown by Lyons and Victoir  \cite{LV07}, any H\H older continuous path {$X\in C^\alpha([0,T],\Rr^d)$ can} be associated with a rough path, but this association is far from canonical and in fact for $\alpha<1/2$ there are infinitely many such rough paths.

	Now, we define the analog of weakly controlled paths \cite[Def.1]{gubinelli2004}:
	\begin{definition}[Controlled paths]\label{chp02def:conrtollab}
		Let {$p>q>1$ }and $X\in C^{p-var}([0,T],\Rr^d)$. A pair $(Y,Y')\in C^{p-var}([0,T],\Rr^k)\times C^{p-var}([0,T],\Rr^{d\times k})$ of finite $p$-variation a $(p, q)$-{controlled path} with respect to $X$ if
		\[
		R^Y_{t,s} {:=} Y_{t, s} - Y'_t X_{t, s}
		\]
		has a finite $q$-variation. We denote by $\mathcal{D}^{p, q}_{_X}([0,T], \Rr^k)$ the set of all {$(p, q)$}-{controlled path}s with respect to $X$. 
	\end{definition}
	The path $X$ is called the {\it control} or {\it reference path}. 
	Typical examples of {controlled path}s arise from smooth functions $f: \Rr^d\to \Rr$ of $X$:
	$$ Y(t)=f(X(t)),\qquad Y'(t)=\nabla f(X(t)).$$
	$R^Y(t,s)$ is then  given by the remainder in a first order Taylor expansion. By analogy any $Y'$ satisfying  Def. \ref{def.roughpath} is called a `Gubinelli derivative' for $Y$.  $R^{Y}(s,t)$ plays the role of a remainder in a first order  expansion of $Y$, and $Y'(s)$ plays the role of a  `derivative' of $Y$ with respect to $X$. The requirement is that the remainder $R^{Y}$ is smoother than $Y$ itself: we go from $p$ to $q<p$ in the finite variation regularity scale.
	The above definition corresponds to weakly-controlled paths in \cite{gubinelli2004}, for our convenience, throughout the paper we will use the name ``$(p, q)-$ controlled paths'' or ``controlled paths'' if the exponents $p, q$ are apparent from the context. 
	
	One can check that  $\mathscr{D}^{p, q}_{X}([0,T],\Rr^d)$ is a Banach space under the norm 
	$$\|(Y,Y')\|_{\mathscr{D}^{p, q}_{X}}=    |Y_0|+|Y'_0|+ \underbrace{\|Y'\|_{p, [0,T]}\,+\,\|R^Y\|_{q, [0,T]}}_{:=\|Y,Y'\|_{p, q,X}}.$$

	The next theorem establishes that controlled paths are proper integrands for rough integration:
	\begin{theorem}[c.f. {\cite{frizhairer}[Theorem 4.10]}, {\cite{gubinelli2004}[Theorem 1]}]\label{chp02theorem:itegralcontrol}
		Let  $p\in [2, 3),\, q\geq p/2$,  $\mathbf{X}=(X,\mathbb{X})\in\mathcal{C}^{p-var}([0,T], \Rr^d)$. Let also $p^{-1}+q^{-1}>1$ and $(Y,Y')\in\mathcal{D}^{p, q}_{_X}([0,T], \Rr^k)$ be a {controlled path}. Define the compensated Riemann sums
		\[
		S (\pi):=\sum_{[t, s]\in \pi} Y_{t}X_{t, s}+ Y'_{t}\mathbb{X}_{t,s}.
		\]
		Then the limit
		\[
		\int_0^T Y d\mathbf{X}:= \lim_{|\pi|\to 0} S (\pi)
		\]
		exist and satisfies the estimate
		\ba
		\left|\int_t^s Y dX-Y_{t}X_{t, s}- Y'_{t}{\mathbb{X}}_{t,s}\right|\nonumber
		\leq C\left(\|{R}^Y\|_{q,[t,s]}\|X\|_{p,[t,s]}+\|Y'\|_{p,[t,s]}\|{\mathbb{X}}\|_{p/2,[t,s]}\right)\nonumber.
		\ea
		Moreover, the map
		\[
		(Y,Y')\mapsto (Z,Z'):=\left(\int_0^{\cdot}Yd\mathbf{X},\, Y\right),
		\]
		$$\mathscr{D}^{p,q}_{X}([0,T],\mathcal{L}(\Rr^d,\Rr^k{))\rightarrow} \mathscr{D}^{p, q}_{X}([0,T],\mathcal{L}(\Rr^d,\Rr^k{))}
		$$
		is continuous and 
		\begin{align*}
		\|Z,Z'\|_{p, q,X}\leq \|Y\|_{p}+\|Y'\|_{\infty}\|\mathbb{X}\|_{p/2}
		+C\left(\|X\|_{p}\|R^{Y}\|_{q}+\|Y'\|_{p}\|\mathbb{X}\|_{p/2}\right).
		\end{align*}
	\end{theorem}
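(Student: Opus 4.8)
The plan is to build the rough integral with the sewing lemma \cite{frizhairer} in its $p$-variation, control-function form, applied to the one-step approximation $\Xi_{t,s}:=Y_tX_{t,s}+Y'_t\mathbb{X}_{t,s}$, whose Riemann sums are exactly the compensated sums $S(\pi)=\sum_{[t,s]\in\pi}\Xi_{t,s}$. First I would compute the second-order defect $\delta\Xi_{t,u,s}:=\Xi_{t,s}-\Xi_{t,u}-\Xi_{u,s}$ for $t\le u\le s$. Substituting $X_{t,s}=X_{t,u}+X_{u,s}$, Chen's relation from property (i) of Definition \ref{def.roughpath} (i.e.\ $\mathbb{X}_{t,s}-\mathbb{X}_{t,u}-\mathbb{X}_{u,s}=X_{t,u}\otimes X_{u,s}$), and the controlled-path decomposition $Y_{t,u}=Y'_tX_{t,u}+R^Y_{t,u}$, the term $Y'_t(X_{t,u}\otimes X_{u,s})$ cancels and one is left with
\[
\delta\Xi_{t,u,s}=-\,R^Y_{t,u}\,X_{u,s}\;-\;Y'_{t,u}\,\mathbb{X}_{u,s}.
\]

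The second task is to control this defect by a superadditive function. From \eqref{chp2ieq:varholder} and its analogue for two-parameter functions, $|X_{u,s}|\le\|X\|_{p,[u,s]}$, $|\mathbb{X}_{u,s}|\le\|\mathbb{X}\|_{p/2,[u,s]}$, $|R^Y_{t,u}|\le\|R^Y\|_{q,[t,u]}$ and $|Y'_{t,u}|\le\|Y'\|_{p,[t,u]}$, so that
\[
|\delta\Xi_{t,u,s}|\;\le\;\|R^Y\|_{q,[t,u]}\|X\|_{p,[u,s]}\;+\;\|Y'\|_{p,[t,u]}\|\mathbb{X}\|_{p/2,[u,s]}.
\]
Setting $\omega([t,s]):=\|X\|^p_{p,[t,s]}+\|\mathbb{X}\|^{p/2}_{p/2,[t,s]}+\|Y'\|^p_{p,[t,s]}+\|R^Y\|^q_{q,[t,s]}$ --- a sum of superadditive functions (cf.\ \eqref{chp2ieq:varadditive}), hence superadditive and continuous --- the bound above reads $|\delta\Xi_{t,u,s}|\le\omega([t,s])^{1/p+1/q}+\omega([t,s])^{3/p}$, and the assumptions $p^{-1}+q^{-1}>1$ and $p<3$ make both exponents strictly larger than $1$. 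The sewing lemma then guarantees that $S(\pi)$ converges as $|\pi|\to0$ to a limit independent of the approximating sequence, which defines $\int_0^T Y\,d\mathbf{X}$; running the argument on a subinterval $[t,s]$ and keeping the two bilinear products above separate (each has total exponent $>1$), rather than collapsing them into $\omega^\theta$, yields the stated local estimate.

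For the final assertion, set $Z_t:=\int_0^t Y\,d\mathbf{X}$, $Z'_t:=Y_t$. The local estimate gives $R^Z_{t,s}=Z_{t,s}-Z'_tX_{t,s}=Y'_t\mathbb{X}_{t,s}+\rho_{t,s}$ with $|\rho_{t,s}|\le C(\|R^Y\|_{q,[t,s]}\|X\|_{p,[t,s]}+\|Y'\|_{p,[t,s]}\|\mathbb{X}\|_{p/2,[t,s]})$. Since $p/2\le q$, the map $(t,s)\mapsto Y'_t\mathbb{X}_{t,s}$ has finite $q$-variation with norm at most $\|Y'\|_\infty\|\mathbb{X}\|_{p/2}$; and summing $|\rho_{t_k,t_{k+1}}|^q$ over a partition and absorbing the excess powers of $\omega$ by superadditivity (this is where $q\ge p/2$ enters) gives $\big(\sum_k|\rho_{t_k,t_{k+1}}|^q\big)^{1/q}\le C(\|X\|_p\|R^Y\|_q+\|Y'\|_p\|\mathbb{X}\|_{p/2})$. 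Hence $R^Z$ has finite $q$-variation, $(Z,Z')\in\mathscr{D}^{p,q}_X$, and since $\|Z'\|_{p}=\|Y\|_{p}$ the displayed norm estimate follows. Continuity of $(Y,Y')\mapsto(Z,Z')$ is then automatic, since the map is linear and the bound just obtained shows it is bounded, with operator norm controlled by $\|X\|_p$ and $\|\mathbb{X}\|_{p/2}$.

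The computation of $\delta\Xi$ and the invocation of the sewing lemma are routine; the real work, and the main thing to get right, is the bookkeeping across the two regularity scales --- exponent $q$ for the remainders $R^Y,R^Z$ and exponent $p/2$ for the second-order part $\mathbb{X}$ --- needed to upgrade the pointwise sewing estimate into a genuine finite-$q$-variation bound on $R^Z$, so that $(Z,Z')$ lands back in the Banach space $\mathscr{D}^{p,q}_X$. This closure step is precisely where the hypotheses $q\ge p/2$ and $p^{-1}+q^{-1}>1$ are used.
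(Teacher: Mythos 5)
Your proof is correct and follows essentially the same route as the paper: the local sewing estimate is the content of \cite[Theorem 31]{FritzShekhar}, which the paper cites and you reconstruct in detail (computing $\delta\Xi_{t,u,s}=-R^Y_{t,u}X_{u,s}-Y'_{t,u}\mathbb{X}_{u,s}$, bounding it by a superadditive control raised to powers $>1$), while the passage from the local estimate to the bound on $\|R^Z\|_q$ via $R^Z_{t,s}=Y'_t\mathbb{X}_{t,s}+\rho_{t,s}$ and superadditivity of $q$-th powers is precisely the paper's argument in \eqref{eq:RZ}. A minor remark: the hypothesis $q\ge p/2$ is genuinely needed only for the $Y'_t\mathbb{X}_{t,s}$ term (so that $(\|\mathbb{X}\|_{p/2}^{p/2})^{2q/p}$ is superadditive); for the $\rho$ term $q\ge p/3$ would already suffice.
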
 
	In the theorem $Y'_t\mathbb{X}_{t,s}$ is interpreted via the natural inclusion:
	\begin{align*}
	\mathcal{L}(\Rr^d,\mathcal{L}(\Rr^d,\Rr^k)) \hookrightarrow\mathcal{L}(\Rr^d\otimes\Rr^d,\Rr^k)\\
	Y'(v\otimes w) := \underbrace{Y'(v)}_{\in \mathcal{L}(\Rr^d,\Rr^k)} (w),\, v, w \in \Rr^d
	\end{align*}
	(in coordinates as {$(Y'_t\mathbb{X}_{t,s})_l:=\sum_{i, j=1}^d (Y')^{l j}_i\mathbb{X}^{i j},\, l=1,\ldots, k$}). 
	\begin{proof}
		The proof of the first estimate is similar to \cite[Theorem 31]{FritzShekhar} so  we omit it here.
		The second estimate of the theorem follows from the first one and the triangle inequality by noting {that} ${R^Z(t, s)}=\int_t^s Yd\textbf{X}-Y_tX_{t,s}$, and 
		\begin{align}\label{eq:RZ}
		|{R^Z(t,s)}|\leq \Big|\int_t^s Y\, d\textbf{X}-Y_tX_{t,s}-Y'_t\mathbb{X}_{t,s}\Big|\,+\,|Y'_t\mathbb{X}_{t,s}|\nonumber\\
		\leq
		C\left(\|{R}^Y\|_{q,[t,s]}\|X\|_{p,[t,s]}+\|Y'\|_{p,[t,s]}\|\tilde{\mathbb{X}}\|_{p/2,[t,s]}\right)\
		+\|Y'\|_{\infty}\|\mathbb{X}\|_{p/2,[t,s]}.
		\end{align}
		hence, using that the $q$-th power of the right-hand side is superadditive, we obtain
		\begin{align*}
		\|R^Z\|_{q,[0,T]}\leq C \left(\|{R}^Y\|_{q,[0, T]}\|X\|_{p,[0,T]}+\|Y'\|_{p,[0,T]}\|\tilde{\mathbb{X}}\|_{p/2,[0,T]}\right)
		+\|Y'\|_{\infty}\|\mathbb{X}\|_{p/2, [0,T]}.
		\end{align*}
		{Consequently,}
		\begin{align*}
		\|Z,Z'\|_{p, q, X}=\|Y\|_{p}+\|R^Z\|_{q}
		\leq \|Y\|_{p}+\|Y'\|_{\infty}\|\mathbb{X}\|_{p/2}\\
		+C\left(\|X\|_{p}\|R^{Y}\|_{q}+\|Y'\|_{p}\|\mathbb{X}\|_{p/2}\right).
		\end{align*}
	\end{proof}

	\section{Non-anticipative functionals of rough paths }
	\label{chp:actions}
	In this section, we study the behaviour of rough paths under the actions of regular non-anticipative functionals. We construct a rough integral for integrands given by sufficiently regular non-anticipative functionals. 
	
	\subsection{Non-anticipative functionals}
	Let us recall briefly the definition of non-anticipative functionals and their derivatives \cite{cont2012}.
	A functional
	$F:[0, T] \times D([0, T], \mathbb{R}^d) \rightarrow \mathbb{R}$ on  the space $D([0, T], \mathbb{R}^d)$ of \cadlag paths is called non-anticipative if it satisfies a causality property:
	\begin{align}\label{0eq:Non-AnticipativityCondition}
	F(t, {x}) = F(t, {x}_t) \qquad \forall {x} \in \Omega,
	\end{align}
	where ${x}_t$ represents the path ${x}$ stopped at time $t$. It turns out that it is convenient to define these non-anticipative functionals on the space of stopped paths, where we define a stopped path as an equivalence class in $[0, T] \times D([0, T], \mathbb{R}^d)$ with respect to the following equivalence relation:
	\begin{align*}
	(t, {x}) \sim (t', {x}') \Longleftrightarrow (t = t' \,\,\, \mathrm{and} \,\,\, {x}(t \wedge \cdot) = {x}'(t'\wedge \cdot)).
	\end{align*}
	It is possible to endow this space with a metric structure, via the following distance function:
	\begin{align*}
	d_\infty((t, {x}), (t', {x}')) := \sup_{u \in [0, T]} |{x}(u \wedge t) - {x}'(u \wedge t')| + |t - t'| \\
	= ||{x} - {x}'||_\infty + |t-t'|.
	\end{align*}
	The space $(\Lambda_T^d, d_\infty)$ is then a complete metric space. Now, every map satisfying condition \eqref{0eq:Non-AnticipativityCondition} can be viewed as a functional on the space of stopped paths. 
	\begin{definition}[Non-anticipative functional] A  \textbf{non-anticipa\-tive functional}  is a measurable map $F: (\Lambda_T^d, d_\infty) \rightarrow \mathbb{R}^k$.
		We denote $\mathbb{C}^{0,0}(\Lambda_T^d)$ the set of continuous maps $F: (\Lambda_T^d, d_\infty) \rightarrow \mathbb{R}^k$.
	\end{definition} 
	$F\in \mathbb{C}^{0,0}(\Lambda_T^d)$ implies joint continuity in $(t,{x})$. We will also need some weaker notions of continuity \cite{cont2012}.
	\begin{definition}
		A non-anticipative functional $F$ is said to be:
		\begin{itemize}
			\item \textbf{continuous at fixed times} if for any $t \in [0, T]$, $F(t, \cdot)$ is continuous w.r.t. the uniform norm $||\cdot||_\infty$, i.e., $\forall {x} \in D([0, T], \mathbb{R}^d)$, $\forall \epsilon > 0 $, $\exists \nu > 0$ such that $\forall {x}' \in D([0, T], \mathbb{R}^d)$:
			\begin{align*}
			||{x}_t- {x}'_t||\infty < \nu \Rightarrow |F(t, {x}) - F(t, {x}')| < \epsilon,
			\end{align*}
			\item \textbf{left-continuous} if $\forall (t, {x}) \in \Lambda_T^d$, $\forall \epsilon > 0$, $\exists \nu > 0$ such that $\forall (t', {x}') \in \Lambda_T^d$:
			\begin{align*}
			t' < t \,\,\, \mathrm{and} \,\,\, d_\infty((t, {x}), (t', {x}')) < \nu \Rightarrow |F(t, {x}) - F(t', {x}')| < \epsilon.
			\end{align*}
			We denote the set of left-continuous functionals by $\mathbb{C}^{0,0}_l(\Lambda_T^d)$. 
		\end{itemize}
	\end{definition} 
	We will also need a notion of local boundedness for functionals.
	\begin{definition}
		A functional $F$ is called \textbf{boundedness-preserving} if for every compact subset $K$ of $\mathbb{R}^d$, $\forall t_0 \in [0, T]$, $\exists C(K, t_0) > 0$ such that:
		\begin{align*}
		\forall t \in [0, t_0],\,\,\, \forall (t, {x}) \in \Lambda_T^d: \,\,\, {x}([0, t]) \subset K \Rightarrow |F(t, {x})| < C(K, t_0).
		\end{align*}
		We denote the set of boundedness preserving functionals by $\mathbb{B}(\Lambda_T^d)$.
	\end{definition}
	
	We now recall some definition of differentiability for non-anticipative functionals. Given $e \in \mathbb{R}^d$ and ${x} \in D([0, T], \mathbb{R}^d)$, we define the vertical perturbation ${x}_t^e$ of $(t, {x})$ as the \cadlag path obtained by adding a jump discontinuity to the path ${x}$ at time $t$ and of size $e$, that is:
	\begin{align*}
	{x}_t^e := {x}_t + e\mathbbm{1}_{[t, T]}.
	\end{align*}
	\begin{definition}
		A non-anticipative functional $F$ is said to be:
		\begin{itemize}
			\item \textbf{horizontally differentiable} at $(t, {x}) \in \Lambda_T^d$ if:
			\begin{align}
			\mathcal{D}F(t, {x}) = \lim_{h \downarrow 0} \frac{F(t + h {x}_t) - F(t, {x})}{h}
			\end{align} 
			exists. If $\mathcal{D}F$ exists for all $(t, {x}) \in \Lambda_T^d$, then $\mathcal{D}F$ defines a new non-anticipative functional, called the horizontal derivative of $F$.
			\item \textbf{vertically differentiable} at $(t, {x}) \in \Lambda_T^d$ if the map:
			\begin{align*}
			g_{_{(t, {x})}} : \mathbb{R}^d &\rightarrow \mathbb{R}\\
			e &\mapsto F(t, {x}_t+e\ 1_{[t,T]})
			\end{align*}
			is differentiable at $0$. In that case, the gradient at $0$ is called the Dupire derivative (or vertical derivative) of $F$ at $(t, {x})$:
			\begin{align}
			\nabla_{x} F(t, {x}) = \nabla g_{_{(t, {x})}}(0) \in \mathbb{R}^d,
			\end{align}
			that is, we have $\nabla_{x} F(t, {x}) = (\partial_i F(t, {x}), i = 1, ..., d)$ with 
			\begin{align*}
			\partial_i F(t, {x}) = \lim_{h \rightarrow 0} \frac{F(t, {x}_t + he_i\mathbbm{1}_{[t, T]}) - F(t, {x}_t)}{h},
			\end{align*}
			where $(e_i, i=1, ..., d)$ is the canonical basis of $\mathbb{R}^d$. If $\nabla_{x} F$ exists for all $(t, {x}) \in \Lambda_T^d$, then $\nabla_{x} F : \Lambda_T^d \rightarrow \mathbb{R}^d$ defines a non-anticipative functional called the vertical derivative of $F$.
		\end{itemize}
	\end{definition}
	Note that, since the objects that we obtain when computing these derivatives are still non-anticipative functionals, we can reiterate these operations and introduce higher order derivatives, such as $\nabla_{x}^2 F$. This leads to the definition of the following class of smooth functionals.
	\begin{definition}
		We define $\mathbb{C}^{1,k}_b(\Lambda_T^d)$ as the set of non-anticipative functionals $F: (\Lambda_T^d, d_\infty) \rightarrow \mathbb{R}$ which are:
		\begin{itemize}
			\item horizontally differentiable, with $\mathcal{D}F$ continuous at fixed times;
			\item $k$ times vertically differentiable, with $\nabla_{x}^j F \in \mathbb{C}_l^{0,0}(\Lambda_T^d)$ for $j = 0, \ldots, k$;
			\item $\mathcal{D}F, \nabla_{x} F, \ldots, \nabla_{x}^k F \in \mathbb{B}(\Lambda_T^d)$.
		\end{itemize}
	\end{definition}
	
	Throughout the section, we will work with functionals satisfying the following assumption of Lipschitz continuity in the metric $d_{\infty}:$
	
	\begin{hyp}\label{chp3ass.lipd}
		$\exists K>0, \quad \forall (t, X),\, (t', X')\in \Lambda^d_T,$
		\[    |F(t,X)-F(t',X')|\leq K d_{\infty}((t, X),\, (t', X')).
		\]
		The space of such functionals is denoted by $Lip(\Lambda_T^d, d_{\infty}).$
	\end{hyp}
	
	We note that the above property implies the following Lipschitz continuity property
	
	\begin{hyp}[Uniformly Lipschitz continuity]\label{chp3ass.Fholder}
		$\exists K>0, \quad \forall X,\, X'\in D([0,T], \Rr^d),$
		\[    |F(t,X)-F(t,X')|\leq K \|X_t-X'_t\|_{\infty}.
		\]
		We denote the space of such functionals by $Lip(\Lambda_T^d, \|\cdot\|_{\infty}).$
	\end{hyp}
	\begin{hyp}[Horizontal Lipschitz continuity]\label{chp3ass.horizlip}
		$\exists K>0, \quad \forall X\in D([0,T], \Rr^d),$
		\[    |F(t,X)-F(t',X)|\leq K |t'-t|.
		\]
		We denote the space of such functionals by $hLip(\Lambda_T^d).$
	\end{hyp}It is not hard to see that $Lip(\Lambda_T^d, d_{\infty})=Lip(\Lambda_T^d, \|\cdot\|_{\infty}) \cap hLip(\Lambda_T^d)$
	
	\subsection{Actions of functionals on rough paths}
	We are now ready to study actions of regular non-anticipative functionals on rough paths and controlled paths.
	
	The following lemma is a particular case of \cite{Anna}[Lemma 5.11], which allows to approximate paths with finite variation by piece-wise affine paths. 
	
	\begin{lemma}\label{chp2ilem:approxomega}
		For any path $  X\in C^{p-var}([0,T],\Rr^d)$ and an integer $N>1$ and interval $[t,s]$ there exists  $ X^N\in C([0,s]; \Rr^d)$ such that it is a piece-wise linear on $[t,s]$
		\begin{itemize}
			\item $  X^N$ coincides with $  X$ on $[0,t]:$     
			\[
			X^N_t=  X_t,
			\]
			\item $X\to  X^N$ is a linear map and
			\[
			\|X^N\|_{\infty}\leq \|X\|_{\infty},\, \|X^N\|_{p, [t,s]}\leq \|X\|_{p, [t,s]},
			\]
			\item $  X^N$  approximates $X$:
			\[
			\|  X-  X^N\|_{\infty}\leq C N^{-\nu}\|X\|_{p, [t,s]},
			\]
			
			\item $ X^N$ has a bounded variation on $[t,s]$ with the variation
			\[
			V_1(X^N, t, s):= \int_t^s |d  X^N|\leq C N^{1-\nu}\|X\|_{p, [t,s]},
			\]
			where $\nu:= p^{-1}.$
		\end{itemize}
	\end{lemma}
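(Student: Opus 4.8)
The plan is to reparametrise the restriction of $X$ to $[t,s]$ by its accumulated $p$-variation and interpolate linearly between equally spaced levels. I would set $\omega(u):=V_p(X;t,u)$ for $u\in[t,s]$; by \eqref{chp2ieq:varadditive} it is non-decreasing with $\omega(t)=0$, it is continuous (as remarked after \eqref{chp2ieq:varadditive}), and $\omega(s)=\|X\|_{p,[t,s]}^p=:\Omega$. If $\Omega=0$ then $X$ is constant on $[t,s]$ and $X^N:=X$ works; so assume $\Omega>0$ and use the intermediate value theorem to pick $t=u_0<u_1<\dots<u_N=s$ with $\omega(u_i)=\tfrac{i}{N}\,\Omega$ (when $\omega$ is flat at that level, take the left endpoint of the plateau; the $u_i$ are then genuinely strictly increasing because consecutive levels differ by $\Omega/N>0$). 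Superadditivity gives $V_p(X;u_i,u_{i+1})\le\omega(u_{i+1})-\omega(u_i)=\Omega/N$ for every $i$. I then let $X^N$ agree with $X$ on $[0,t]$ and, on $[t,s]$, be the continuous piecewise-linear path through the nodes $(u_i,X(u_i))_{i=0}^N$: it is continuous on $[0,s]$ since $X^N(t)=X(t)$, and it is linear in the values of $X$ at the (fixed) nodes, which gives the stated linearity of $X\mapsto X^N$.

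Next I would check the bounds one by one. The identity $X^N_t=X_t$ and piecewise-linearity on $[t,s]$ are immediate. For the sup bound: on each $[u_i,u_{i+1}]$ the point $X^N(u)$ is a convex combination of $X(u_i)$ and $X(u_{i+1})$, hence $|X^N(u)|\le\|X\|_\infty$, and combined with $X^N=X$ on $[0,t]$ this gives $\|X^N\|_\infty\le\|X\|_\infty$. For the approximation error, fix $u\in[u_i,u_{i+1}]$; then $|X(u)-X^N(u)|\le|X(u)-X(u_i)|+|X^N(u)-X^N(u_i)|$ is at most twice the oscillation $\sup_{v,w\in[u_i,u_{i+1}]}|X(v)-X(w)|$, which by \eqref{chp2ieq:varholder} and \eqref{chp2ieq:varadditive} is $\le V_p(X;u_i,u_{i+1})^{1/p}\le(\Omega/N)^{1/p}$; hence $\|X-X^N\|_\infty\le 2\,N^{-1/p}\|X\|_{p,[t,s]}$, the claimed bound with $\nu=p^{-1}$. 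For the total variation, $X^N$ being piecewise-linear on $[t,s]$ has length $\sum_{i=0}^{N-1}|X(u_{i+1})-X(u_i)|\le\sum_i V_p(X;u_i,u_{i+1})^{1/p}\le N\,(\Omega/N)^{1/p}=N^{1-p^{-1}}\|X\|_{p,[t,s]}$, which is the last bound with $\nu=p^{-1}$.

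The remaining bound $\|X^N\|_{p,[t,s]}\le\|X\|_{p,[t,s]}$ is the only ingredient that is not completely elementary: it is the classical fact that piecewise-linear interpolation at a partition does not increase $p$-variation for $p\ge1$, the point being that $X^N$ is affine on each node-interval so its increments there are collinear, which lets one reduce the supremum over partitions of $[t,s]$ to a comparison with $\sum_i|X(u_{i+1})-X(u_i)|^p\le\Omega$ (via \eqref{chp2ieq:varholder} and superadditivity). This is exactly the special case of \cite{Anna}[Lemma 5.11] referred to in the statement, so I would invoke that result rather than reprove it; once it is granted, the construction above together with the three direct estimates completes the proof, and I do not anticipate any further difficulty.
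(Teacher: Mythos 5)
The paper does not reprove this lemma; it cites \cite{Anna}[Lemma 5.11] directly, so there is no in-paper argument to compare against. On its own terms, your construction recovers the sup-norm, approximation, and total-variation estimates correctly (the computations $\|X-X^N\|_\infty\le 2N^{-\nu}\|X\|_{p,[t,s]}$ and $V_1(X^N;t,s)\le N^{1-\nu}\|X\|_{p,[t,s]}$ are fine), and deferring the $p$-variation monotonicity to the cited lemma is reasonable.

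However, there is a genuine gap in the linearity claim. You choose the nodes $u_i$ by solving $\omega(u_i)=\tfrac{i}{N}\Omega$ where $\omega(u)=V_p(X;t,u)$. This partition \emph{depends on} $X$: different input paths give different nodes. Interpolation at an $X$-dependent partition is not a linear operation on $X$, so the sentence ``it is linear in the values of $X$ at the (fixed) nodes, which gives the stated linearity of $X\mapsto X^N$'' is incorrect --- the nodes are not fixed. And linearity is not a cosmetic add-on here: in the proof of Lemma \ref{chp3theorem:contrcont} the paper approximates two paths $Y_1,Y_2$ simultaneously and needs control of differences such as $\|Y_1^N-Y_2^N\|_\infty$ in terms of $\|Y_1-Y_2\|_\infty$; this only works if $Y_1^N$ and $Y_2^N$ are built on the \emph{same} partition. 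Indeed, with adaptive partitions $\|Y_1^N-Y_2^N\|_\infty$ picks up an extra $O(N^{-\nu})$ term that is not controlled by $\|Y_1-Y_2\|_\infty$.

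The intended construction fixes the partition using a given superadditive control $\omega$ (in Lemma \ref{chp3theorem:contrcont} this is $\omega([u,v])=\|Y_1\|^p_{p,[u,v]}+\|Y_2\|^p_{p,[u,v]}+|u-v|$), choosing $u_i$ with $\omega([t,u_i])=\tfrac{i}{N}\omega([t,s])$; then $X\mapsto X^N$ \emph{is} linear for all paths controlled by $\omega$, and the estimates hold with $\|X\|_{p,[t,s]}$ replaced by $\omega([t,s])^{1/p}$. In fact, as stated with bounds purely in $\|X\|_{p,[t,s]}$ \emph{and} linearity, the lemma cannot be realized by a single map: a linear map has a fixed partition, and a path whose $p$-variation is concentrated in one mesh cell violates the $N^{-\nu}$ approximation bound. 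To repair your write-up you should build the partition from a fixed reference control rather than from $V_p(X;t,\cdot)$, and phrase the estimates accordingly.
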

	
	Next, we recall a corollary of \cite{Anna}[Theorem 5.12], which provides a connection between regular functionals and {controlled path}s. 
	
	\begin{lemma} \label{chp3theorem:controll} Let ${p>2},\,  X\in C^{p-var}([0,T],\mathbb{R}^d)$ and $F\in \mathbb{C}^{0,1}_b(\Lambda^d_T,\mathbb{R}^n)$  with $F$ and  $\nabla_{{x}} F$ in $Lip(\Lambda^d_T, d_{\infty})$. 
		Define
		\begin{equation}\label{chp2ieq:controlledpath}
		\mathcal{R}^{F}_{t,s}(X):=F(s,  X_s)-F(t,  X_t)-\nabla_{{x}}F(t,  X_{t})(X(s)-  X(t)).
		\end{equation}    
		Then there exists a constant $C_{F,T}$ increasing in $T$, which depends on the regularity properties of $F$ and its derivatives locally in a neighbourhood  of  $X$, such that ${\mathcal{R}}^{F}(X)$ has bounded $q_p:=\frac {p^2} {p+1}$-variation and
		\[
		\|{\mathcal{R}}^{F}(X)\|_{q_p, [t,s]}\leq C_{p, F, T}\left(|s-t|+\|  X\|_{p, [t,s]}^{1+p^{-1}}\right).
		\]
		Thus the pair $(F(\cdot,   X), \nabla_{{x}} F(\cdot,   X))$ is a controlled   path: $$(F(\cdot,   X), \nabla_{{x}} F(\cdot,   X))\in \mathcal{D}^{p, q_p}_{_X }([0,T], {\Rr^n}),\qquad q_p:=\frac {p^2} {p+1}.$$
	\end{lemma}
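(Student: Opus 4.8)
The plan is to estimate the remainder functional $\mathcal{R}^F_{t,s}(X)$ on a single interval $[t,s]$ and then exploit superadditivity to pass to a $q_p$-variation bound. The natural strategy is a two-step expansion: first approximate $X$ on $[t,s]$ by the piece-wise linear path $X^N$ provided by Lemma \ref{chp2ilem:approxomega}, then use the functional Taylor/It\^o formula for $\mathbb{C}^{1,1}_b$ functionals along the bounded-variation path $X^N$, and finally control the approximation error using the Lipschitz hypotheses on $F$ and $\nabla_x F$. More precisely, I would write
\[
\mathcal{R}^F_{t,s}(X) = \big(\mathcal{R}^F_{t,s}(X) - \mathcal{R}^F_{t,s}(X^N)\big) + \mathcal{R}^F_{t,s}(X^N),
\]
and treat the two terms separately, optimising over $N$ at the end.

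For the first (error) term, since $X$ and $X^N$ agree on $[0,t]$ and $X^N_t = X_t$, the differences $F(s,X_s)-F(s,X^N_s)$ and the analogous term for $\nabla_x F$ are bounded by $K\|X-X^N\|_{\infty,[t,s]} \le CKN^{-\nu}\|X\|_{p,[t,s]}$ using Assumptions \ref{chp3ass.Fholder}; the linear term $\nabla_x F(t,X_t)(X(s)-X(t))$ is common to both remainders and subtracts out exactly because $X^N_t = X_t$ and $X^N_s = X_s$ — wait, $X^N_s$ need not equal $X_s$, so one also picks up $|\nabla_x F(t,X_t)|\,|X(s)-X^N(s)| \le C N^{-\nu}\|X\|_{p,[t,s]}$, still of the same order. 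So the error term is $O(N^{-\nu}\|X\|_{p,[t,s]})$.

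For the second (main) term, $X^N$ has finite variation on $[t,s]$, so one may apply the fundamental theorem of calculus along $X^N$: writing $u\mapsto F(u, X^N_u)$ and using horizontal differentiability plus vertical differentiability one gets, for a $\mathbb{C}^{1,1}_b$ functional,
\[
F(s,X^N_s) - F(t,X^N_t) = \int_t^s \mathcal{D}F(u,X^N_u)\,\d u + \int_t^s \nabla_x F(u, X^N_u)\, \d X^N(u).
\]
Subtracting $\nabla_x F(t,X_t)(X^N(s)-X^N(t)) = \int_t^s \nabla_x F(t,X_t)\,\d X^N(u)$ leaves
\[
\mathcal{R}^F_{t,s}(X^N) = \int_t^s \mathcal{D}F(u,X^N_u)\,\d u + \int_t^s \big(\nabla_x F(u,X^N_u) - \nabla_x F(t,X_t)\big)\,\d X^N(u) + \big(\text{boundary error}\big).
\]
The first integral is bounded by $\|\mathcal{D}F\|_\infty |s-t|$ using boundedness-preservation. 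For the second, Lipschitz continuity of $\nabla_x F$ in $d_\infty$ gives $|\nabla_x F(u,X^N_u) - \nabla_x F(t,X_t)| \le K(|u-t| + \|X^N_u - X_t\|_{\infty}) \le K(|s-t| + \|X\|_{p,[t,s]})$, and this is multiplied by $V_1(X^N,t,s) \le CN^{1-\nu}\|X\|_{p,[t,s]}$, yielding a term of order $N^{1-\nu}\|X\|_{p,[t,s]}(|s-t| + \|X\|_{p,[t,s]})$.

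Collecting, $|\mathcal{R}^F_{t,s}(X)| \lesssim N^{-\nu}\|X\|_{p,[t,s]} + |s-t| + N^{1-\nu}\|X\|_{p,[t,s]}^2 + \dots$; choosing $N$ so that $N^{-\nu}\|X\|_{p} \approx N^{1-\nu}\|X\|_p^2$, i.e. $N \approx \|X\|_{p,[t,s]}^{-1}$ (rounded to an integer $\ge 2$, valid on small intervals), balances the two $N$-dependent terms to order $\|X\|_{p,[t,s]}^{1+\nu} = \|X\|_{p,[t,s]}^{1+p^{-1}}$, and one obtains
\[
|\mathcal{R}^F_{t,s}(X)| \le C_{p,F,T}\big(|s-t| + \|X\|_{p,[t,s]}^{1+p^{-1}}\big).
\]
Since $|s-t|$ and $\|X\|^p_{p,[t,s]}$ are superadditive interval functions, so are $|s-t|^{q_p}$ and $\|X\|_{p,[t,s]}^{(1+p^{-1})q_p} = \|X\|_{p,[t,s]}^{p}$ (using $(1+p^{-1})q_p = (1+p^{-1})\frac{p^2}{p+1} = p$), hence the right-hand side raised to the power $q_p$ is superadditive, which upgrades the one-interval bound to the claimed $q_p$-variation estimate $\|\mathcal{R}^F(X)\|_{q_p,[t,s]} \le C_{p,F,T}(|s-t| + \|X\|_{p,[t,s]}^{1+p^{-1}})$. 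The controlled-path conclusion is then immediate from Definition \ref{chp02def:conrtollab} with $Y = F(\cdot,X)$, $Y' = \nabla_x F(\cdot,X)$ and $R^Y = \mathcal{R}^F(X)$, noting $q_p < p$.

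I expect the main obstacle to be making the bounded-variation expansion step fully rigorous: one must justify the functional fundamental theorem of calculus along the piece-wise linear path $X^N$ for merely $\mathbb{C}^{1,1}_b$ functionals (rather than $\mathbb{C}^{1,2}_b$), controlling the horizontal and vertical increments on each linear segment and summing, and one must track carefully that all constants depend only on the local behaviour of $F, \mathcal{D}F, \nabla_x F$ in a fixed neighbourhood of $X$ (via boundedness-preservation applied to a compact set containing $\{X^N([0,s])\}$, which is uniformly bounded by $\|X\|_\infty$). The bookkeeping of the boundary error terms arising because $X^N(s) \ne X(s)$, and verifying the integer choice of $N$ is legitimate on sufficiently fine partitions (for coarse intervals the bound holds trivially by adjusting $C_{p,F,T}$), are the remaining technical points; everything else is a routine application of the stated lemmas and the Lipschitz assumptions.
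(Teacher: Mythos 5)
Your overall strategy -- approximate $X$ by the piecewise-linear $X^N$ of Lemma~\ref{chp2ilem:approxomega}, expand $\mathcal{R}^F_{t,s}(X^N)$ via a functional FTC along the bounded-variation path, estimate the pieces via the Lipschitz hypotheses and $V_1(X^N;t,s)\le CN^{1-\nu}\|X\|_{p,[t,s]}$, optimise over $N$ with $N\approx\|X\|_{p,[t,s]}^{-1}$, then upgrade the pointwise bound to a $q_p$-variation bound via superadditivity -- is exactly the approach the paper uses for the companion Lemma~\ref{chp3theorem:contrcont}, and the paper states that Lemma~\ref{chp3theorem:controll} follows ``the same idea''. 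Your arithmetic is also right: $(1+p^{-1})q_p=p$, so $\|X\|_{p,[t,s]}^{(1+\nu)q_p}=V_p(X;t,s)$ and $|s-t|^{q_p}$ are both superadditive (since $q_p>1$ for $p>2$), and the balancing of $N^{-\nu}\|X\|_p$ against $N^{1-\nu}\|X\|_p^2$ gives $\|X\|_{p}^{1+\nu}$ as claimed. Your ``boundary error'' worry is actually moot because the construction in Lemma~\ref{chp2ilem:approxomega} interpolates at the endpoints, so $X^N(s)=X(s)$ and $X^N_t=X_t$; the only approximation error is the one you correctly bound, $|F(s,X^N_s)-F(s,X_s)|\le K\|X^N-X\|_\infty\le CKN^{-\nu}\|X\|_{p,[t,s]}$.

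There is, however, a genuine gap: you apply the functional It\^o/FTC formula (the starred Lemma in the proof of Lemma~\ref{chp3theorem:contrcont}, i.e.\ \cite{cont2012}, Prop.~5.26), which requires $F\in\mathbb{C}^{1,1}_b(\Lambda^d_T)$ -- in particular horizontal differentiability of $F$, with $\mathcal{D}F$ continuous at fixed times. But the hypothesis of Lemma~\ref{chp3theorem:controll} is only $F\in\mathbb{C}^{0,1}_b(\Lambda^d_T)$ together with $F,\nabla_xF\in Lip(\Lambda^d_T,d_\infty)$: there is no horizontal derivative $\mathcal{D}F$. (You even write ``$\mathbb{C}^{1,1}_b$'' when discussing the hypothesis; this misreads the statement.) This matters because the entire point of the lemma is to reach the level of regularity of functionals such as the smoothed running maximum of the paper's Example, which are vertically but not horizontally differentiable. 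Consequently the identity $F(s,X^N_s)-F(t,X^N_t)=\int_t^s\mathcal{D}F\,\d u+\int_t^s\nabla_xF\,\d X^N$ on which your decomposition of $\mathcal{R}^F_{t,s}(X^N)$ rests is not available, and the bound ``$\|\mathcal{D}F\|_\infty|s-t|$ by boundedness-preservation'' is not even well posed. What the horizontal-Lipschitz assumption ($F\in hLip(\Lambda^d_T)$, which is part of $Lip(\Lambda^d_T,d_\infty)$) does give you is the bound $|F(u+h,\lambda_u)-F(u,\lambda_u)|\le Kh$; the repair is to establish, by a direct Riemann-sum/telescoping argument along the partition induced by $X^N$'s breakpoints, the inequality
\[
\Bigl|F(s,X^N_s)-F(t,X^N_t)-\int_t^s\nabla_xF(u,X^N_u)\,\d X^N(u)\Bigr|\le K|s-t|+ \varepsilon(N),
\]
where the vertical increments accumulate to the Riemann--Stieltjes integral using the left-continuity of $\nabla_xF$, the horizontal increments accumulate to $K|s-t|$ by $hLip$, and the discrepancy between the ``ramp'' increment $\lambda_{u_{j+1}}$ and the ``jump'' increment $(\lambda_{u_j})^{e_j}$ is controlled via continuity of $\nabla_xF$. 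This is a non-trivial substitute for Prop.~5.26 and is precisely the step your proposal treats as already established; without it the decomposition at the core of your estimate is not justified. Once that substitute lemma is in place, the rest of your argument goes through unchanged.
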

	We omit the proof of this lemma as it is based on the same idea as the proof of the next result.
	
	For our purposes, we would like to have a stability result for the estimate in the previous theorem in terms of the underlying path $Y.$ The following result allows us to control the error term $\mathcal{R}^{F}(Y)_{s,t}$ in $Y$ and will be useful in the proof of the existence of solutions to path-dependent RDEs. 
	
	\begin{lemma}[Continuity of Control]\label{chp3theorem:contrcont} Let   $Y_1, Y_2\in C^{p-var}([0,T],\mathbb{R}^d)$ for some $p\in [2, 3)$. Let $F\colon \Lambda^d_T\to V$ be a non-anticipative functional with values in a finite dimensional real vector space $V$. Assume $ F\in \mathbb{C}^{1,2}_b(\Lambda^d_T),\,  \nabla F\in \mathbb{C}^{1,1}_b(\Lambda^d_T)$ and $F, \mathcal{ D} F,\, \mathcal{D}\nabla  F,\, \nabla^2_{X} F\in Lip(\Lambda^d_T, \|\cdot\|_{\infty})$. Furthermore,  if $R, M>0$ are such that 
		\[
		\|Y_1\|^p_{p, [0,T]}\,+\,\|Y_2\|^p_{p, [0,T]}\,+\,T\leq R.
		\]
		and 
		$$\|Y_1-Y_2\|_{\infty, [0, T]}+\|Y_1-Y_2\|_{p, [0,T]}\leq M,$$
		then for all $t\leq  s\in[0,T]$
		\[
		\|\mathcal{R}^{F}(Y_1)-\mathcal{R}^{F}(Y_2)\|_{q_{_p},[t,s]}\leq C_{F, M, R}\left(\|Y_1-Y_2\|^{\nu}_{\infty, [0, s]}+\|Y_1-Y_2\|_{p, [t,s]}^{\nu}\right),
		\]
		where $q_p:=\frac {p^2} {p+1},\, \nu:=p^{-1}$.
		
	\end{lemma}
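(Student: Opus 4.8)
The plan is to reduce the difference $\mathcal{R}^{F}(Y_1)-\mathcal{R}^{F}(Y_2)$ to quantities that can be controlled by the two path-distances $\|Y_1-Y_2\|_{\infty,[0,s]}$ and $\|Y_1-Y_2\|_{p,[t,s]}$, by reproducing the proof scheme of Lemma~\ref{chp3theorem:controll} (which estimates $\mathcal{R}^{F}(Y)$ for a single path) but keeping track of the dependence on the driving path throughout. First I would recall the mechanism behind the single-path estimate: replacing $Y$ on $[t,s]$ by its piecewise-linear approximant $Y^N$ from Lemma~\ref{chp2ilem:approxomega}, one writes $\mathcal{R}^{F}(Y)_{t,s}$ as a sum of (a) a functional Taylor/Itô-type expansion along the finite-variation path $Y^N$, which by the $\mathbb{C}^{1,2}_b$ regularity and the horizontal/vertical derivatives produces a term of order $V_1(Y^N)^2 + |s-t|$ together with a second-order remainder, and (b) an approximation error $\mathcal{R}^{F}(Y)_{t,s}-\mathcal{R}^{F}(Y^N)_{t,s}$ controlled via the $Lip(\Lambda^d_T,\|\cdot\|_\infty)$ hypotheses on $F$ and $\nabla_x F$ and the bound $\|Y-Y^N\|_\infty\le CN^{-\nu}\|Y\|_{p,[t,s]}$. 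Optimizing over $N$ gives the exponent $1+p^{-1}$ and the variation exponent $q_p=p^2/(p+1)$.

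Next I would run this same decomposition for $Y_1$ and $Y_2$ simultaneously, using a common $N$, so that $\mathcal{R}^{F}(Y_1)_{t,s}-\mathcal{R}^{F}(Y_2)_{t,s}$ splits into: the difference of the two approximation errors, and the difference of the two Taylor expansions along $Y_1^N$ and $Y_2^N$. For the approximation-error differences, the linearity of $Y\mapsto Y^N$ in Lemma~\ref{chp2ilem:approxomega} gives $(Y_1-Y_2)^N=Y_1^N-Y_2^N$, so these are estimated exactly as before but now against $\|Y_1-Y_2\|_{\infty}$ and $\|Y_1-Y_2\|_{p,[t,s]}$, using the $\|\cdot\|_\infty$-Lipschitz bounds on $F$ and $\nabla_x F$. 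For the difference of the two Taylor expansions one uses a telescoping argument: each term is a product of a derivative of $F$ evaluated along one of the paths times an increment of that path (or of time); writing $A_1B_1-A_2B_2=(A_1-A_2)B_1+A_2(B_1-B_2)$, the increment-difference factors $B_1-B_2$ are increments of $Y_1-Y_2$ and are bounded by $\|Y_1-Y_2\|_{p,[t,s]}$ (times a factor $V_1(Y_i^N)\le CN^{1-\nu}R^{1/p}$ coming from the other factor), while the derivative-difference factors $A_1-A_2$ are bounded by the $\|\cdot\|_\infty$-Lipschitz hypotheses on $\mathcal{D}F,\ \mathcal{D}\nabla F,\ \nabla^2_X F$ applied to $\|Y_1-Y_2\|_{\infty,[0,s]}$. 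The uniform bound $R$ controls all the $V_1(Y_i^N)$ and $\|Y_i\|_{p}$ factors and $M$ keeps the Lipschitz increments uniformly small, so all constants can be absorbed into a single $C_{F,M,R}$.

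Finally I would optimize over $N$ in the combined bound. Schematically the two groups contribute something like $C\big(N^{1-\nu}\|Y_1-Y_2\|_{p,[t,s]} + N^{2-2\nu}\|Y_1-Y_2\|_{\infty,[0,s]}\big)$ from the Taylor differences and $C N^{-\nu}(\text{distance})$ from the approximation-error differences; choosing $N$ to balance the dominant competing powers yields the exponent $\nu=p^{-1}$ on each distance, after which one checks superadditivity of the $q_p$-th power of the bound (as in the proof of Theorem~\ref{chp02theorem:itegralcontrol}) to pass from the two-point estimate to the $q_p$-variation norm on $[t,s]$. The main obstacle I anticipate is the bookkeeping in the telescoping step: the Taylor expansion along a finite-variation path contains several structurally different terms (horizontal-derivative term with a $|s-t|$ increment, first-order vertical term, second-order vertical term with its integral remainder), and for each one must identify correctly which factor is a \emph{path-difference} carrying a power of $\|Y_1-Y_2\|$ and which factor is a \emph{bounded} quantity carrying a power of $N$ and $R$, so that after optimization every surviving term has the clean exponent $\nu$; getting the second-order remainder term to cooperate (it is not linear in the path, so the telescoping there requires an intermediate mean-value step) is the delicate point.
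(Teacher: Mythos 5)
Your proposal reproduces the paper's proof in its entirety: piecewise-linear approximation via Lemma~\ref{chp2ilem:approxomega}, the functional It\^o formula to write $\mathcal{R}^F$ along a finite-variation path as a sum of integrals involving $\mathcal{D}F$, $\mathcal{D}\nabla F$ and $\nabla^2 F$, telescoping the products to isolate path differences, optimizing over $N$, and superadditivity of the $q_p$-th power to pass from a two-point estimate to the variation norm. One small schematic slip: the dominant Taylor-difference term scales as $N^{1-\nu}$ rather than $N^{2-2\nu}$, because in the second-order integral $I_3$ the factor $Y^N_{r,s}$ is bounded directly by $\|Y\|_{p,[t,s]}$ (Lemma~\ref{chp2ilem:approxomega} gives $\|Y^N\|_{p,[t,s]}\le\|Y\|_{p,[t,s]}$) rather than by $V_1(Y^N)$, and it is precisely this $N^{1-\nu}$ scaling, balanced against the $N^{-\nu}$ approximation error, that makes the choice $N\approx d_\nu(Y_1,Y_2)^{-1}$ land on the claimed exponent $\nu$.
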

	\begin{proof}
		We will prove only the case when the values of $F$ are scalar, i.e. $V=\Rr$, for the general case it is enough to use the result for each coordinate of $F$. Let $\omega$ be a superadditive map on intervals of $[0,T]$, given by
		\[
		\omega([u,v]):=\|Y_1\|^p_{p, [u,v]}\,+\,\|Y_2\|^p_{p, [u,v]}\,+\,|u-v| .
		\]
		
		We start by recalling the following result:
		\begin{lemma*} [see \cite{cont2012}, Proposition 5.26]
			Assume $G\in C^{1,1}_b(\mathcal{W}^d_T)$ and $\lambda$ is a continuous path with finite variation on $[t,s]$, then
			\begin{equation*}
			G(s,\lambda_s)-G(t,\lambda_t)=\int_t^s\mathcal{D} G(u,\lambda_u)du+\int_t^s\nabla  G(u,\lambda_u)d\lambda(u)\tag{*}
			\end{equation*}
			where the second integration is in the Riemann-Stieltjes sense.
			
		\end{lemma*}
		
		Let us fix a  Lipschitz continuous  path $Y;[0,T]\to\Rr^d$, using the above lemma repeatedly for $G=\nabla  ^j F,\,j=0,1,2$, we will obtain a expression of $\mathcal{R}^{F}_{t,s}(Y)$ in terms of the derivatives $\mathcal{D}^i\nabla  ^j F$.  For the sake of convenience we denote by $\partial_i F$ and $Y^i$ respectively the $i$-th coordinates of $\nabla  F$ and $Y.$ We also use Einstein's convention of summation in repeated indexes. Using (*) for $G=F$, we have
		
		\begin{equation}\label{chp3eq:rfy1}
		\mathcal{R}^{F}_{t,s}(Y)=\int_t^s\mathcal{D}F(u,Y_u)du+\int_t^s\left(\partial_i F(u,Y_u)-\partial_i F(t, Y_t)\right)dY^i(u)
		\end{equation}
		For the second term  on the right-hand side of the above identity we use the Lemma (*) with $G=\partial^2_{i} F$  and then Fubini's theorem to get
		\eq{\label{chp3eq:rfy2}
			\int_t^s\left(\partial_i F(u,Y_u)-\partial_i F(t, Y_t)\right)dY^i(u)=\int_t^s\int_t^u \mathcal{D}\partial_i F(r,Y_r)drdY^i(u)\nonumber\\
			+ \int_t^s\int_t^u \partial^2_{ij} F(r,Y_r)dY^j(r)dY^i(u)\nonumber\\
			=\int_t^s\mathcal{D}\partial_i F(r,Y_r)(Y^i(s)-Y^i(r))dr\nonumber\\
			+\int_t^s\partial^2_{ij} F(r,Y_r)(Y^i(s)-Y^i(r))\dot Y^j(r)dr.
		}
		
		Combining \ref{chp3eq:rfy1}, \ref{chp3eq:rfy2}  we arrive to the formula
		\eq{\label{chp3eq:rfyfinal}
			\mathcal{R}^{F}_{t,s}(Y)=\underbrace{\int_t^s\mathcal{D}F(u,Y_u)du}_{:=I_1(Y)}+\underbrace{\int_t^s\mathcal{D}\partial_i F(r,Y_r)Y^i_{r,s}dr}_{:=I_2(Y)}
			+\underbrace{\int_t^s\partial^2_{ij} F(r,Y_r)Y^i_{r, s}\dot Y^j(r)dr}_{:=I_3(Y)}.}
		Let $Y^N$ be the piece-wise linear (affine) approximation of $Y$ given by Lemma \ref{chp2ilem:approxomega}.
		Using the estimates of that lemma, Lipschitz continuity of  
		$\mathcal{D}F, \mathcal{D}\nabla  F,\nabla  ^2 F$  and the following consequences of triangle inequality
		\[
		|a_1b_1-a_2b_2|\leq |a_1-a_2||b_1|+|a_2||b_1-b_2|,
		\]
		\[
		|a_1b_1c_1-a_2b_2c_2|\leq |a_1-a_2||b_1||c_1|+|a_2||b_1-b_2||c_1|+|a_2||b_2||c_1-c_2|,
		\]
		we obtain, for each term of representation \eqref{chp3eq:rfyfinal}, we have
		\[
		|I_1(Y_1^N)-I_1(Y^N_2)|\leq C_F\|Y_1-Y_2\|_{\infty}|s-t|,
		\]
		\begin{align*}
		|I_2(Y_1^N)-I_2(Y^N_2)|\leq C_F\|Y_1-Y_2\|_{\infty}\|Y_1\|_{p, [t,s]}|s-t|\\
		+\|\mathcal{ D}\nabla  F(\cdot, Y^N_2)\|_{\infty}\|Y_1-Y_2\|_{p, [t,s]}|s-t|\\\leq 
		C_{F}\left(\|Y_1-Y_2\|_{\infty}  \omega([t,s])^{\nu}\, +\,\|Y_1-Y_2\|_{p,[t,s]}\right) \omega([t,s]),
		\end{align*}
		and
		\begin{align*}
		|I_3(Y_1^N)-I_3(Y^N_2)|\leq C_F\|Y_1-Y_2\|_{\infty} N^{1-\nu}\|Y_1\|^2_{p, [t,s]}\nonumber\\
		+\|\nabla  ^2 F(\cdot, Y^N_2)\|_{\infty} \|Y_1-Y_2\|_{p,[t,s]} (N^{1-\nu}\|Y_1\|_{p, [t,s]})\nonumber\\
		+\|\nabla  ^2 F(\cdot, Y^N_2)\|_{\infty} \|Y_2\|_{p, [t,s]} (N^{1-\nu}\|Y_1-Y_2\|_{p,[t,s]})\\\leq
		N^{1-\nu}C_{F,R}\left(\|Y_1-Y_2\|_{\infty}  \omega([t,s])^{\nu}\, +\,\|Y_1-Y_2\|_{p,[t,s]}\right) \omega([t,s])^{\nu}.
		\end{align*}
		From these
		\eq{
			\label{chp3eq:rfyholder}
			|\mathcal{R}^{F}_{t,s}(Y^N_1)-\mathcal{R}^{F}_{t,s}(Y^N_2)|
			\leq C_{F, R} (\|Y_1-Y_2\|_{\infty} + \|Y_1-Y_2\|_{p,[t,s]}) \omega([t,s])\nonumber\\
			+\,C_F N^{1-\nu} \left(\|Y_1-Y_2\|_{\infty}  \omega([t,s])^{\nu}\, +\,\|Y_1-Y_2\|_{p,[t,s]}\right) \omega([t,s])^{\nu}\nonumber\\
			\leq C_{F, T, R}(\|Y_1-Y_2\|_{\infty} \omega([t,s])^{\nu}+\|Y_1-Y_2\|_{p, [t,s]})\nonumber\\
			\times( \omega([t,s])^{1-\nu}+N^{1-\nu}  \omega([t,s])^{\nu}).
		}
		
		On the other hand since $(Y^N)_t=Y_t$ and $Y^N(s)=Y(s)$ we can replace $\mathcal{R}^{F}_{t,s}(Y^N_1)$ and $\mathcal{R}^{F}_{t,s}(Y^N_2)$ respectively with $\mathcal{R}^{F}_{t,s}(Y_1)$ and $\mathcal{R}^{F}_{t,s}(Y_2)$ 
		with an error
		\eq{\label{chp3eq:rfyapprox}
			|\mathcal{R}^{F}_{t,s}(Y^N_i)-\mathcal{R}^{F}_{t,s}(Y_i)|=|F(s,Y^N_{i})-F(s,Y_{i})|\nonumber\\
			\leq C_F\|Y^N_{i}-Y_{i}\|_{\infty}\leq C_F\|Y_{i}\|_{p, [t,s]}N^{-\nu}\leq C_FN^{-\nu} \omega([t,s])^{\nu} ,\, i=1,\, 2.
		}
		Let
		\[
		d_{\nu}(Y_1,Y_2):= \|Y_1-Y_2\|_{\infty} \omega([t,s])^{\nu}+\|Y_1-Y_2\|_{p, [t,s]},
		\]
		from \eqref{chp3eq:rfyholder} and \eqref{chp3eq:rfyapprox} and triangle inequality
		\begin{align*}
		|\mathcal{R}^{F}_{t,s}(Y_1)-\mathcal{R}^{F}_{t,s}(Y_2)|\leq C_{F,R}
		\big(d_{\nu}(Y_1,Y_2) \omega([t,s])^{1-\nu}\\
		+N^{1-\nu}d_{\nu}(Y_1,Y_2)  \omega([t,s])^{2\nu}+N^{-\nu} \omega([t,s])^{\nu}\big).
		\end{align*}
		To optimize the above bound, we choose $N$ so that
		$$
		N^{1-\nu} d_{\nu}(Y_1,Y_2)  \omega([t,s])^{\nu}\approx N^{-\nu} \omega([t,s])^{\nu}
		$$
		i.e.
		$
		N\approx d_{\nu}(Y_1,Y_2)^{-1}.
		$
		Hence
		\begin{align*}
		|\mathcal{R}^{F}_{t,s}(Y_1)-\mathcal{R}^{F}_{t,s}(Y_2)|\leq C_{F, R}\, 
		\left(d_{\nu}(Y_1,Y_2) \omega([t,s])^{1-\nu}  +  d_{\nu}(Y_1,Y_2)^{\nu} \omega([t,s])^{\nu}\right)\\
		\leq C_{F, M, R}\left(\|Y_1-Y_2\|^{\nu}_{\infty} \omega([t,s])^{\nu+\nu^2}+\|Y_1-Y_2\|_{p, [t,s]}^{\nu} \omega([t,s])^{\nu} \right).
		\end{align*}
		Using the inequality $(|a|+|b|)^q\leq 2^{q}(|a|^q+|b|^q), \forall q>0$, we have
		\begin{align*}
		|\mathcal{R}^{F}_{t,s}(Y_1)-\mathcal{R}^{F}_{t,s}(Y_2)|^{q_p}
		\leq C_{F, M, R}\left(\|Y_1-Y_2\|^{p/(p+1)}_{\infty} \omega([t,s])+\|Y_1-Y_2\|_{p, [t,s]}^{p/(p+1)} \omega([t,s])^{p/(p+1)} \right).
		\end{align*}
		It remains to note that the right-hand side is superadditive function of the interval $[t,s]$, thus summing up such inequalites over a partitions of $[t,s]$, yields
		\[
		\|\mathcal{R}^{F}_{t,s}(Y_1)-\mathcal{R}^{F}_{t,s}(Y_2)\|_{q_p, [t,s]}\leq C_{F, M, R}\left(\|Y_1-Y_2\|^{\nu}_{\infty} \omega([t,s])^{\nu+\nu^2}+\|Y_1-Y_2\|_{p, [t,s]}^{\nu} \omega([t,s])^{\nu} \right),
		\]
		hence the result.
	\end{proof}
	
	As a consequence of the previous theorem, we can control the $p$-variation distance of the images of two paths under a regular functional:
	
	\begin{corollary}\label{chp3Cor:controlcont}Let   $Y_1, Y_2\in C^{p-var}([0,T],\mathbb{R}^d)$ for some $p\in [2,3)$ and  $F\colon \Lambda^d_T\to V$. Assume $ F\in \mathbb{C}^{1,2}_b(\Lambda^d_T),\,  \nabla F\in \mathbb{C}^{1,1}_b(\Lambda^d_T)$ and $\mathcal{ D} F,\, \mathcal{D}\nabla  F,\, \nabla  F,\, \nabla^2_{X} F$ are in $Lip(\Lambda^d_T, \|\cdot\|_{\infty})$.  Then 
		\[
		\|F(\cdot,Y_1)-F(\cdot, Y_2)\|_{p, [t,s]}\leq C_{F, M, R}\left(\|Y_1-Y_2\|_{\infty, [0,s]}^{\nu}\,+\,\|Y_1-Y_2\|_{p, [t,s]}^{\nu}\right),
		\]
		provided
		\[
		|Y_1(0)|+\|Y_1\|_{p, [0,T]},\,|Y_2(0)|+\|Y_2\|_{p, [0,T]}\leq R,
		\]
		and
		\[
		\|Y_1-Y_2\|_{\infty, [0,T]}+\|Y_1-Y_2\|_{p, [0,T]}\leq M.
		\]
	\end{corollary}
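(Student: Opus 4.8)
The plan is to peel off the ``affine part'' of the increment of $F(\cdot,Y_i)$ and reduce everything to Lemma~\ref{chp3theorem:contrcont}. First I would record the identity that is just a rearrangement of the definition \eqref{chp2ieq:controlledpath} of $\mathcal{R}^F$: for all $u\le v$ in $[0,T]$ and $i=1,2$,
\[
F(v,(Y_i)_v)-F(u,(Y_i)_u)=\mathcal{R}^F_{u,v}(Y_i)+\nabla_{{x}}F(u,(Y_i)_u)\,(Y_i)_{u,v}.
\]
Subtracting the $i=1$ and $i=2$ identities, the $(u,v)$-increment of the path $u\mapsto F(u,(Y_1)_u)-F(u,(Y_2)_u)$ equals $A_{u,v}+B_{u,v}+C_{u,v}$ with
\[
A_{u,v}=\mathcal{R}^F_{u,v}(Y_1)-\mathcal{R}^F_{u,v}(Y_2),\quad
B_{u,v}=\big(\nabla_{{x}}F(u,(Y_1)_u)-\nabla_{{x}}F(u,(Y_2)_u)\big)(Y_1)_{u,v},\quad
C_{u,v}=\nabla_{{x}}F(u,(Y_2)_u)\,(Y_1-Y_2)_{u,v}.
\]
Since $\|\cdot\|_{p,[t,s]}$ on increment functions satisfies the triangle inequality (Minkowski on each partition, then supremum), it suffices to bound $\|A\|_{p,[t,s]}$, $\|B\|_{p,[t,s]}$, $\|C\|_{p,[t,s]}$ separately.

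For $A$ I would use that $q_p=p^2/(p+1)\le p$, hence $\|A\|_{p,[t,s]}\le\|A\|_{q_p,[t,s]}$, and then apply Lemma~\ref{chp3theorem:contrcont} directly, after enlarging $R$ if necessary so that it also dominates $T$ (to match that lemma's hypothesis $\|Y_1\|_{p}^p+\|Y_2\|_p^p+T\le R$, which is implied by $|Y_i(0)|+\|Y_i\|_{p,[0,T]}\le R$ up to a harmless change of constant). Bounding the superadditive weight by $\omega([t,s])\le\omega([0,T])\le R$ turns the $\omega$-factors into constants, so $\|A\|_{p,[t,s]}\le C_{F,M,R}\big(\|Y_1-Y_2\|_{\infty,[0,s]}^{\nu}+\|Y_1-Y_2\|_{p,[t,s]}^{\nu}\big)$.

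For $C$ I would use the boundedness-preserving property of $\nabla_{{x}}F$: since $Y_2([0,u])$ lies in the closed ball of radius $|Y_2(0)|+\|Y_2\|_{p,[0,T]}\le R$, one has $|\nabla_{{x}}F(u,(Y_2)_u)|\le C_{F,R}$ uniformly in $u\le s$, whence $\sum_k|C_{t_k,t_{k+1}}|^p\le C_{F,R}^p\,\|Y_1-Y_2\|_{p,[t,s]}^p$. For $B$ I would use $\nabla_{{x}}F\in Lip(\Lambda_T^d,\|\cdot\|_\infty)$, which gives $|\nabla_{{x}}F(u,(Y_1)_u)-\nabla_{{x}}F(u,(Y_2)_u)|\le K\|(Y_1)_u-(Y_2)_u\|_\infty\le K\|Y_1-Y_2\|_{\infty,[0,s]}$, while the remaining factor sums to $\|Y_1\|_{p,[t,s]}^p\le R^p$; together $\|B\|_{p,[t,s]}\le C_{F,R}\,\|Y_1-Y_2\|_{\infty,[0,s]}$. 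Finally, since $\|Y_1-Y_2\|_{\infty,[0,T]}\le M$ and $\|Y_1-Y_2\|_{p,[t,s]}\le\|Y_1-Y_2\|_{p,[0,T]}\le M$ and $\nu=1/p<1$, I would convert each of these linear-in-$\|Y_1-Y_2\|$ estimates for $B$ and $C$ into the stated $\nu$-exponent form via $a\le M^{1-\nu}a^{\nu}$ for $0\le a\le M$, and add the three bounds.

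The heavy analytic input is entirely contained in Lemma~\ref{chp3theorem:contrcont}, which we may assume, so there is no real obstacle here; the only points requiring care are the bookkeeping of which norm ($\|\cdot\|_{\infty,[0,s]}$ versus $\|\cdot\|_{p,[t,s]}$) each of $A,B,C$ contributes, and the final passage from the exponent-$1$ bounds on the affine part $B+C$ to the exponent-$\nu$ bound, which is legitimate only because the increments are a priori controlled by $M$.
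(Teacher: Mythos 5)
Your proposal follows essentially the same route as the paper: the identical decomposition of the increment of $F(\cdot,Y_1)-F(\cdot,Y_2)$ into the $\mathcal{R}^F$-difference plus the two affine-part terms, the same invocation of Lemma~\ref{chp3theorem:contrcont} for the remainder, and the same elementary Lipschitz estimates for the other two pieces, finishing with the implicit conversion of the exponent-$1$ bounds to exponent-$\nu$ via the $M$-bound. Your extra remarks (that $q_p\le p$ gives $\|\cdot\|_{p}\le\|\cdot\|_{q_p}$, and that the lemma's hypothesis $\|Y_1\|_p^p+\|Y_2\|_p^p+T\le R$ is matched up to a constant depending on $T$) are correct housekeeping that the paper leaves implicit.
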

	
	\begin{proof}
		This follows immediately from Lemma \ref{chp3theorem:contrcont} and following identity
		\begin{align*}
		(F(\cdot,Y_1)-F(\cdot, Y_2))_{t,s}=(\nabla  F(t,Y_1)(Y_1)_{t, s}-\nabla  F(t, Y_2)(Y_2)_{t, s})\\
		+\mathcal{R}^{F}_{t,s}(Y_1)-\mathcal{R}^{F}_{t,s}(Y_2).
		\end{align*}
		Indeed, we get
		\begin{align*}
		|(F(\cdot,Y_1)-F(\cdot, Y_2))_{t,s}|\leq
		|(\nabla  F(t,Y_1)-\nabla  F(t, Y_2))(Y_1)_{t, s}|\,\\
		+\,|\nabla  F(t, Y_2)(Y_1-Y_2)_{t, s})|\,
		+\,|\mathcal{R}^{F}_{t,s}(Y_1)-\mathcal{R}^{F}_{t,s}(Y_2)|\\ \leq C_F \|Y_1-Y_2\|_{\infty} \|Y_1\|_{p, [t,s]}\,+\, C_F \|Y_1-Y_2\|_{p, [t,s]}    +\,|\mathcal{R}^{F}_{t,s}(Y_1)-\mathcal{R}^{F}_{t,s}(Y_2)|.
		\end{align*}
		From this and the Lemma \ref{chp3theorem:contrcont}
		\begin{align*}
		\|F(\cdot,Y_1)-F(\cdot, Y_2)\|_{p,[t,s]}
		\leq C_F \|Y_1-Y_2\|_{\infty} \|Y_1\|_{p, [t,s]}\,+\, C_F \|Y_1-Y_2\|_{p, [t,s]}\\
		+ C_{F, M, R} \Big(\|Y_1-Y_2\|_{\infty}^{\nu}+\|Y_1-Y_2\|_{p, [t,s]}^{\nu}\Big)
		\end{align*}

	\end{proof}

	We will now use Lemma \ref{chp3theorem:controll} to define rough integrals with regular non-anticipative integrands:
	\begin{theorem}[Rough integral for functionals]\label{chp3theorem:roughpathint} Let $\bm{X}:=(X, \mathbb{X})\in \mathcal{C}^{p-var}([0,T],\mathbb{R}^d)$ be a rough path for some $p\in [2, \sqrt{2}+1)$.
		Assume $F\in \mathbb{C}^{0,1}_b(\Lambda^d_T,\mathbb{R}^n)$  with $F$ and  $ \nabla  F$ are  locally horizontally Lipschitz continuous and are in  $Lip(\Lambda^d_T, \|\cdot\|_{\infty})$.
		Then the rough integral
		\begin{equation}\label{chp3def.integral}
		\int_0^u F(s, X)d\bm{X}(s)
		\end{equation}
		exists. Moreover,
		\begin{align*}
		&\left|\int_t^s Fd\bm{X}-F(t,X)(X(s)-X(t))-\nabla F(t,X)\mathbb{X}_{t,s}\right|\lesssim\\
		&
		\left(\|X\|_{p,[t,s]}\|\mathcal{R}^{F}(X)\|_{q_{p},[t,s]}+\|\nabla  F(\cdot,X_{\cdot})\|_{p,[t,s]}\|\mathbb{X}\|_{p/2,[t,s]}\right).
		\end{align*}
		
	\end{theorem}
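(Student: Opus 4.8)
The plan is to derive the statement as a direct consequence of the two results just established: Lemma~\ref{chp3theorem:controll} shows that $F$ evaluated along $X$ is a controlled path, and Theorem~\ref{chp02theorem:itegralcontrol} provides the rough integral of an arbitrary controlled path against $\bm X$ together with exactly the claimed estimate. The only point that genuinely needs checking is that the variation exponent $q_p=p^2/(p+1)$ produced by Lemma~\ref{chp3theorem:controll} is compatible with the integrability condition $p^{-1}+q^{-1}>1$ of Theorem~\ref{chp02theorem:itegralcontrol}, and this is precisely where the hypothesis $p<\sqrt2+1$ is used.

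First I would localize. Since $X\in C^{p-var}([0,T],\mathbb{R}^d)$ is continuous, the map $t\mapsto(t,X_t)$ is continuous into $(\Lambda^d_T,d_\infty)$, so its image is compact; on a fixed bounded neighbourhood of it the hypotheses ``$F,\nabla F$ locally horizontally Lipschitz and in $Lip(\Lambda^d_T,\|\cdot\|_\infty)$'' combine (using $Lip(\Lambda^d_T,d_\infty)=Lip(\Lambda^d_T,\|\cdot\|_\infty)\cap hLip(\Lambda^d_T)$) into $d_\infty$-Lipschitz continuity of $F$ and of $\nabla_x F$, with $F\in\mathbb{C}^{0,1}_b$. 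Lemma~\ref{chp3theorem:controll} then applies and gives
\[
(Y,Y'):=(F(\cdot,X),\nabla_x F(\cdot,X))\in\mathcal{D}^{p,q_p}_{X}([0,T],\mathbb{R}^n),\qquad q_p=\frac{p^2}{p+1},
\]
with remainder $R^Y_{t,s}=\mathcal{R}^F_{t,s}(X)$ of finite $q_p$-variation, and the bound $\|\mathcal{R}^F(X)\|_{q_p,[t,s]}\le C_{p,F,T}\bigl(|s-t|+\|X\|_{p,[t,s]}^{1+1/p}\bigr)$.

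Then I would check the hypotheses of Theorem~\ref{chp02theorem:itegralcontrol} with $q=q_p$: one has $q_p=p^2/(p+1)\ge p/2$ because $p\ge1$, and
\[
\frac1p+\frac1{q_p}=\frac1p+\frac{p+1}{p^2}=\frac{2p+1}{p^2}>1\quad\Longleftrightarrow\quad p^2-2p-1<0\quad\Longleftrightarrow\quad p<1+\sqrt2,
\]
which holds by assumption (and $p\in[2,3)$ is inherited). Thus Theorem~\ref{chp02theorem:itegralcontrol} applies to $(Y,Y')$: the compensated Riemann sums $\sum_{[t,s]\in\pi}F(t,X)X_{t,s}+\nabla F(t,X)\mathbb{X}_{t,s}$ converge as $|\pi|\to0$, defining $\int_0^u F(s,X)\,d\bm X(s)$, and its first estimate, read with $R^Y=\mathcal{R}^F(X)$, $Y'=\nabla F(\cdot,X_\cdot)$, and $Y'_t\mathbb{X}_{t,s}$ interpreted through $\mathcal{L}(\mathbb{R}^d,\mathcal{L}(\mathbb{R}^d,\mathbb{R}^n))\hookrightarrow\mathcal{L}(\mathbb{R}^d\otimes\mathbb{R}^d,\mathbb{R}^n)$, is exactly the displayed inequality.

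The main obstacle is the arithmetic of the variation exponents rather than any analytic difficulty: unlike a smooth function of $X$, whose Taylor remainder sits at level $p/2$, a non-anticipative functional along $X$ is controlled only at the coarser level $q_p=p^2/(p+1)$ coming from Lemma~\ref{chp3theorem:controll}, and rough integration of a $q$-controlled path requires $1/p+1/q>1$; reconciling these two facts is what forces the restriction $p<1+\sqrt2$ and shrinks the admissible range from all of $[2,3)$. Everything else is bookkeeping: verifying that the constant $C_{p,F,T}$ of Lemma~\ref{chp3theorem:controll} is finite on the neighbourhood fixed in the localization step, and that the tensor contraction $\nabla F(t,X)\mathbb{X}_{t,s}$ coincides with the one used in Theorem~\ref{chp02theorem:itegralcontrol}.
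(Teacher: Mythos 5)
Your proposal is correct and follows essentially the same route as the paper: invoke Lemma~\ref{chp3theorem:controll} to exhibit $(F(\cdot,X),\nabla_x F(\cdot,X))$ as a $(p,q_p)$-controlled path, then feed it into Theorem~\ref{chp02theorem:itegralcontrol}, with the only nontrivial check being that $1/p+1/q_p=(2p+1)/p^2>1$ precisely when $p<1+\sqrt2$. The localization remark about passing from local to global Lipschitz bounds on the compact trace of $(t,X_t)$ is a sensible piece of bookkeeping the paper leaves implicit.
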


	\begin{proof}
		By Lemma \ref{chp3theorem:controll} the  $(F(\cdot, X), \nabla  F(\cdot, X))\in \mathscr{D}^{p,q_p}_{X}([0,T],\Rr)$ (is controlled by $X$ in the sense of Definition \ref{chp02def:conrtollab}).  Thus the result follows by Theorem \ref{chp02theorem:itegralcontrol}, one only needs to check that

		for $p<\sqrt{2}+1$, we have $p^{-1}+q_p^{-1}= \frac{2p+1}{p^2}>1$.
	\end{proof}

	We continue to investigate the actions of regular functionals on controlled  paths. The next result asserts the invariance of controlled paths under the action of regular functionals: 
	
	\begin{theorem}\label{chp3theorem:actiononpaths} Let $X\in C^{p-var}([0,T],\mathbb{R}^d)$ and $(Y,Y')\in \mathscr{D}^{p,q_p}_{X}([0,T],\Rr^k),$ where $q_p=p^2/(p+1)$ for $p\in [2, \sqrt{2}+1)$.
		Let $F\colon \Lambda^{d\times k}_T\to \mathcal{L}(\Rr^d,\Rr^m)$ be a non-anticipative functional. Assume $\nabla  F\in \mathbb{C}^{1,2}_b(\Lambda^{d\times k}_T), \nabla F\in \mathbb{C}^{1,1}_b(\Lambda^{d\times k}_T)$ and $\mathcal{ D} F,\, \mathcal{D}\nabla  F,\, \nabla  F,\, \nabla^2_{X} F$ are in $Lip(\Lambda^{d\times k}_T, \|\cdot\|_{\infty})$. 
		Then
		\[
		(F(\cdot,Y), F(\cdot,Y)'):=(F(\cdot,Y),\nabla  F(\cdot,Y)Y')\in \mathscr{D}^{p,q_p}_{X}([0,T],\Rr^m).
		\]
		Furthermore, assuming $1+|Y'_0|+\|Y,Y'\|_{p,q_p,X}\leq M$, we have
		\[
		\|F(\cdot, Y)'\|_{p,[t,s]}\leq C_{F,M}(|s-t|+\|Y\|_{p,[t,s]}\, +\, \|Y'\|_{p, [t,s]}).
		\]
		and
		\[
		\|R^{F(\cdot, Y)}\|_{q_p, [t,s]}
		\leq C_{F,T}(
		\|R^Y\|_{q_p, [t,s]}+ \|Y\|_{p, [t,s]}^{1+\nu}+|s-t|).
		\]

	\end{theorem}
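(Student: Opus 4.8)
The plan is to combine Lemma~\ref{chp3theorem:controll}, applied to the path $(X,Y)\in C^{p-var}([0,T],\Rr^{d\times k})$ and the functional $F$, with an algebraic decomposition that exhibits $\nabla F(\cdot,Y)Y'$ as a Gubinelli derivative for $F(\cdot,Y)$ with respect to $X$. Write $Z := F(\cdot,Y)$, $Z' := \nabla F(\cdot,Y)Y'$, and note the identity
\begin{align*}
R^Z_{t,s} &= Z_{t,s} - Z'_t X_{t,s} \\
&= \big(F(s,(X,Y)_s)-F(t,(X,Y)_t)-\nabla F(t,(X,Y)_t)(X,Y)_{t,s}\big) \\
&\quad + \nabla F(t,(X,Y)_t)\big((X,Y)_{t,s} - (X_{t,s},Y'_t X_{t,s})\big) \\
&\quad + \big(\nabla F(t,(X,Y)_t)(X_{t,s},Y'_t X_{t,s}) - \nabla F(\cdot,Y)_t Y'_t X_{t,s}\big).
\end{align*}
The first bracket is exactly $\mathcal{R}^F((X,Y))_{t,s}$, controlled in $q_p$-variation by Lemma~\ref{chp3theorem:controll}; the second bracket equals $\nabla_Y F(t,\cdot)\cdot R^Y_{t,s}$ (the vertical-in-$Y$ component of $\nabla F$ applied to the remainder $R^Y$), hence is bounded by $\|\nabla F\|_\infty \|R^Y\|_{q_p,[t,s]}$; the third bracket vanishes once one fixes the convention $\nabla F(\cdot,Y)Y' := \nabla_Y F(\cdot,Y)Y' + \nabla_X F(\cdot,Y)$, which is the natural reading of ``$\nabla F$'' on the enlarged path. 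Collecting these and using $\|(X,Y)\|_{p,[t,s]}\lesssim \|X\|_{p,[t,s]}+\|Y\|_{p,[t,s]}$ together with $\|X\|_p \le R^{1/p}$ gives the stated bound on $\|R^{F(\cdot,Y)}\|_{q_p,[t,s]}$.

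For the derivative bound, I would write $Z'_{t,s} = \nabla F(s,(X,Y)_s)Y'_s - \nabla F(t,(X,Y)_t)Y'_t$ and split it as
\begin{align*}
Z'_{t,s} = \big(\nabla F(s,(X,Y)_s)-\nabla F(t,(X,Y)_t)\big)Y'_s + \nabla F(t,(X,Y)_t)\,Y'_{t,s}.
\end{align*}
The second term is bounded by $\|\nabla F\|_\infty\|Y'\|_{p,[t,s]}$. For the first, since $\nabla F\in \mathbb{C}^{1,1}_b$ with $\mathcal{D}\nabla F,\nabla^2 F\in Lip(\Lambda,\|\cdot\|_\infty)$, Corollary~\ref{chp3Cor:controlcont} (applied with $\nabla F$ in place of $F$) gives $\|\nabla F(\cdot,(X,Y))\|_{p,[t,s]}\lesssim \|(X,Y)\|_{p,[t,s]}^{\nu} + \dots$; but a cleaner route for a first-order estimate is to use $\nabla F\in hLip\cap Lip(\Lambda,\|\cdot\|_\infty)$ directly, giving $|\nabla F(s,(X,Y)_s)-\nabla F(t,(X,Y)_t)|\le K(|s-t| + \|X\|_{\infty,[t,s]}+\|Y\|_{\infty,[t,s]}) \le K(|s-t|+\|X\|_{p,[t,s]}+\|Y\|_{p,[t,s]})$, and then $\|Y'_s\|\le |Y'_0|+\|Y'\|_{p,[0,T]}\le M$. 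Summing the $p$-th powers (the relevant quantities raised to the power $p$ being superadditive in $[t,s]$) yields $\|F(\cdot,Y)'\|_{p,[t,s]}\le C_{F,M}(|s-t|+\|X\|_{p,[t,s]}+\|Y\|_{p,[t,s]}+\|Y'\|_{p,[t,s]})$, which after absorbing $\|X\|_{p,[t,s]}$ into the constant (using $\|X\|_p\le R^{1/p}$, or more precisely re-expressing the $X$-increment via the control) gives the claimed form; strictly one keeps $\|X\|_{p,[t,s]}$ explicitly or bounds it, and I would state it as in the theorem with the understanding that $X$ is the fixed reference path.

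The membership $(F(\cdot,Y),F(\cdot,Y)')\in \mathscr{D}^{p,q_p}_X$ then follows: $F(\cdot,Y)'\in C^{p-var}$ by the derivative bound, $F(\cdot,Y)\in C^{p-var}$ because $F\in Lip(\Lambda,\|\cdot\|_\infty)\cap hLip$ makes it directly $p$-variation bounded by the increments of $(t,X,Y)$, and $R^{F(\cdot,Y)}\in C^{q_p-var}$ by the first part. The main obstacle is bookkeeping: one must be careful that $F$ is a functional of the \emph{pair} $(X,Y)$ (so Lemma~\ref{chp3theorem:controll} and Corollary~\ref{chp3Cor:controlcont} are invoked on $\Lambda^{d\times k}_T$, not $\Lambda^d_T$), that the Gubinelli-derivative convention $\nabla F(\cdot,Y)Y'$ correctly combines the $X$- and $Y$-directional derivatives so that the ``third bracket'' above genuinely cancels, and that every error term that appears is raised to a power making it a superadditive interval function before summing over a partition — this last point is what forces the exponent $q_p$ and the $1+\nu$ power on $\|Y\|_p$, exactly as in the proof of Lemma~\ref{chp3theorem:contrcont}. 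The continuity-in-$(Y,Y')$ estimates, if needed, would come from the same decomposition combined with Lemma~\ref{chp3theorem:contrcont} and Corollary~\ref{chp3Cor:controlcont}, but for the stated claim only the single-path bounds are required.
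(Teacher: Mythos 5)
Your overall strategy (decompose $R^{F(\cdot,Y)}$ into the functional remainder $\mathcal{R}^F$ plus a $\nabla F\cdot R^Y$ term, control the first via Lemma~\ref{chp3theorem:controll}, control the Gubinelli-derivative increment via the Lipschitz assumptions) is the same as the paper's. The difference is that you feed the \emph{pair} $(X,Y)$ into $F$, whereas the paper evaluates $F$ on the path $Y$ alone. (The domain $\Lambda^{d\times k}_T$ written in the statement is a typo for $\Lambda^{k}_T$: throughout the proof and in the application to $\sigma$ in Section~4, $F$ is applied as $F(s,Y_s)$ with $Y$ $\Rr^k$-valued, and the range $\mathcal{L}(\Rr^d,\Rr^m)$ is what supplies the $d$.) This misreading is not harmless here, for two reasons.

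First, your bound on $\|F(\cdot,Y)'\|_{p,[t,s]}$ picks up the extra term $\|X\|_{p,[t,s]}$, which is not in the stated conclusion. You try to "absorb it into the constant", but that cannot work: $\|X\|_{p,[t,s]}$ depends on the interval, and replacing it by $\|X\|_{p,[0,T]}$ would turn a quantity that vanishes as $s\to t$ into a fixed constant, destroying the superadditivity/smallness structure that the estimate is used for in the existence proof (see, e.g., \eqref{chp3eq:ineqYk}). Since $F$ does not actually depend on $X$, the correct Lipschitz estimate is $|\nabla F(s,Y_s)-\nabla F(t,Y_t)|\le C(|s-t|+\|Y_s-Y_t\|_\infty)$, and no $\|X\|_{p}$ term ever appears — this is precisely what makes the stated bound $C_{F,M}(|s-t|+\|Y\|_{p,[t,s]}+\|Y'\|_{p,[t,s]})$ achievable. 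Second, the "third bracket" in your remainder decomposition, and the convention $\nabla F(\cdot,Y)Y':=\nabla_Y F\,Y'+\nabla_X F$, are not part of the paper's definitions; the Gubinelli derivative is literally $\nabla F(t,Y_t)Y'_t$ with $\nabla F$ the Dupire derivative in the $Y$-perturbation only. Once $F$ is read as a functional of $Y$ alone, your third bracket vanishes trivially (since $\nabla_X F\equiv 0$) rather than by fiat, your second bracket collapses to $\nabla F(t,Y_t)R^Y_{t,s}$, and the decomposition becomes the clean two-term identity $R^{F(\cdot,Y)}_{t,s}=\mathcal{R}^F_{t,s}(Y)+\nabla F(t,Y_t)R^Y_{t,s}$ used in the paper. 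So the structural idea in your proof is right, but the proof as written does not establish the stated derivative estimate, and correcting it amounts to restoring the paper's argument.
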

	
	\begin{proof}
		From Lipschitz continuity Assumptions on $F$ we obtain
		\begin{align*}
		|F(s,Y_s)-F(t,Y_t)|\leq |F(s,Y_s)-F(s,Y_{t})|+|F(s,Y_{t})-F(t,Y_t)|\\
		\leq C\|Y_s-Y_{t}\|_{\infty}+C|s-t|\leq C(\|Y\|_{p, [t,s]}+|s-t|),
		\end{align*}
		hence
		\eq{\label{chp3eq:DYp}
			\|F(\cdot, Y)\|_{p,[t,s]}\leq c_F(\|Y\|_{p,[t,s]}+|s-t|),
		}
		similarly 
		\eq{\label{chp3eq:DFYnu}
			\|\nabla F(\cdot, Y)\|_{p,[t,s]}\leq c_F(\|Y\|_{p,[t,s]}+|s-t|).
		}
		From the last inequality and triangle inequality, we get
		\begin{align*}
		|(F(\cdot,Y)')_{t,s}|= |(\nabla F(\cdot,Y) Y'(\cdot) )_{t, s}|\\
		\leq
		|\nabla F(t,Y)(Y')_{t, s}|\,
		+\,|(\nabla F(\cdot, Y))_{t, s}Y'(s)|\\
		\leq\Big( \|\nabla F(\cdot,Y)\|_{\infty}\|Y'\|_{p,[t,s]}\,+\,\|\nabla  F(\cdot, Y)\|_{p,[t,s]}\|Y'\|_{\infty, [t,s]}\Big).
		\end{align*}
		Plugging in \eqref{chp3eq:DFYnu}
		\eq{\label{chp3eq:FYnu}
			\|F(\cdot, Y)'\|_{p,[t,s]}\leq \|\nabla F(\cdot,Y)\|_{\infty}\|Y'\|_{p,[t,s]}+C_{F,M}\|Y'\|_{\infty}(|s-t|+ \|Y\|_{p, [t,s]}).
		}
		which with $\|Y'\|_{\infty, [0, T]}\leq |Y'(0)|\,+\, \|Y'\|_{p,[0,T]}\leq M$ implies the first inequality
		Next, for $R^F\equiv R^{F(\cdot, Y)}$, we have
		
		\begin{align}\label{chp3eq:RFts}
		R^F_{t,s}=F(s,Y_s)-F(t,Y_t)-\nabla  F(t,Y_t)Y'_tX_{t,s}=\nonumber\\
		F(s,Y_s)-F(t,Y_t)-\nabla  F(t,Y_t)Y_{t,s}+\nabla  F(t,Y_t)R^Y_{t,s}\nonumber\\
		=\mathcal{R}^{F}_{t,s}(Y)+\nabla  F(t,Y_t)R^Y_{t,s}.
		\end{align}
		To estimate the above, note that from Lemma \ref{chp3theorem:controll}
		\[
		\|\mathcal{R}^{F}(Y)\|_{q_p, [t,s]}\leq C_{F, T} \left[ |s-t|+\|Y\|_{p,[t,s]}^{1+\nu}\right]
		\]
		thus \eqref{chp3eq:RFts} yields
		\begin{align}\label{chp3eq:RFfin}
		\|R^F\|_{q_p, [t,s]}\leq \|\mathcal{R}^{F}(Y)\|_{q_p, [t,s]}+\|\nabla  F(\cdot ,Y)\|_{\infty}\|R^Y\|_{q_p, [t,s]}\nonumber\\
		\leq C_{F,M, T}(
		\|R^Y\|_{q_p, [t,s]}+ \|Y\|_{p, [t,s]}^{1+\nu}+|s-t|).
		\end{align}
		
	\end{proof}

	\section{Path-dependent Differential Equations driven by rough paths}
	\label{chp:existence}
	
	\subsection{The setting of the problem}
	We now turn to our main objective: the study of path-dependent rough differential equations (RDEs).
	Let $(X,\mathbb{X})\in  \mathcal{C}^{p-var}([0,T],\mathbb{R}^d)$ be a given  rough path. We are interested in the following differential equation
	\begin{equation}\label{chp3ode}
	\begin{cases}
	dY(s)= b(s, Y_s)ds+\sigma(s,Y_s)dX (s),\\
	Y_{t_0}=\xi_{t_0},
	\end{cases}
	\end{equation}
	where
	$
	b\colon \Lambda_T^k\to\Rr^k$ and $\sigma\colon \Lambda_T^k \to \mathcal{L}(\Rr^d,\Rr^k)$
	are non-anticipative functionals. Here $\mathcal{L}(V,W)$ denotes the set of linear operators between linear spaces $V, W$, by a slight abuse of notation, we identify $\mathcal{L}(\Rr^d,\Rr^k)$ with the space of $d\times k$ matrices and the Euclidean space $\Rr^d\otimes\Rr^k\equiv\Rr^{d\times k}.$
	
	To  define solutions to this equation, we assume $\sigma$ satisfies the conditions of Theorem \ref{chp3theorem:actiononpaths}. Then
	\begin{align*}
	(Y, Y')\in\mathscr{D}^{p,q_p}_{X } ([0,T],\Rr^k)
	\implies(\sigma(s,Y_s), \nabla  \sigma(s,Y_s)Y'_s )\in \mathscr{D}^{p,q_p}_{X }([0,T],\mathcal{L}(\Rr^d,\Rr^k))
	\end{align*}
	and the equation \ref{chp3ode} may be understood as  a rough integral equation:
	\[
	Y(t)=\xi_{t_0}\,+\, \int_{t_0}^t b(s, Y)\, \d s\,+\,\int_{t_0}^t \sigma(s, Y) d\bm{X}
	\]
	where $\int_{t_0}^t \sigma(s, Y) d\bm{X}$ is the rough integral of the controlled  path $$\left(\Xi, \Xi'\right):=(\sigma(s,Y), \nabla  \sigma(s,Y)Y'(s)).$$ More precisely, we have the following definition
	\begin{definition}\label{chp3def.odesol}
		Let $\bm{X}=(X, \mathbb{X})\in \mathcal{C}^{p-var}([0,T],\Rr^d)$ be rough path over $X$, with $p\in [2, \sqrt{2}+1)$. Assume  $b\in \mathbb{C}^{0,0}_b(\Lambda^k_T,\mathbb{R}^k),\, \sigma\in \mathbb{C}^{0,1}_b(\Lambda^k_T,\mathbb{R}^{d\times k})$  with $\sigma$ and  $ \nabla \sigma$ are  locally horizontally Lipschitz continuous and are in  $Lip(\Lambda^d_T, \|\cdot\|_{\infty})$. A controlled  path $(Y, Y')\in\mathscr{D}^{p,q_p}_{X }([0,T],\Rr^k)$ is called a solution to \ref{chp3ode} if
		\begin{equation}\label{chp3eq:odedef}
		Y(t)=\xi_{t_0}\,+\, \int_{t_0}^t b(s, Y)\, \d s\,+\,\int_{t_0}^t \sigma(s, Y) d\bm{X},
		\end{equation}
		where the second integral is understood as the rough integral for the controlled path $$s\in[0,T]\mapsto (\sigma(s,Y), \nabla  \sigma(s,Y)Y'(s))$$ (which exists due to Theorem \ref{chp3theorem:roughpathint}).
	\end{definition}
	
	Next we specify the assumptions on the coefficients in terms of regularity in Dupire's sense  \cite{CF09,D09}.
	\begin{hyp}\label{ass:b}
		The functional $b\colon \Lambda_T^k\to\Rr^k$ is Lipschitz continuous in $d_{\infty}$; $b\in Lip(\Lambda^{d}_T, d_{\infty})$
	\end{hyp}
	
	\begin{hyp}\label{ass:sigma}
		For the vector field $\sigma\colon \Lambda_T^k \to \mathcal{L}(\Rr^d,\Rr^k)$, we assume
		\begin{itemize}
			\item $\sigma\in \mathbb{C}^{1,2}_b(\Lambda^{d}_T, \mathcal{L}(\Rr^d,\Rr^k)), \nabla \sigma\in \mathbb{C}^{1,1}_b(\Lambda^{d}_T, \mathcal{L}(\Rr^k,\mathcal{L}(\Rr^d,\Rr^k)))$
			\item The derivatives $\sigma,\, \mathcal{ D} \sigma,\, \nabla  \sigma,\,\mathcal{D}\nabla  \sigma,\,  \nabla^2 \sigma$ are Lipschitz continuous in $d_{\infty}$.
		\end{itemize}
	\end{hyp}
			The pioneering work of B. Dupire \cite{D09} and the works by R. Cont and D.A. Fourni\'er \cite{CF09}, \cite{CF10B}, \cite{ContFournie2013} have a number of examples of regular functional in the sense of Dupire derivatives. Some further examples are discussed in \cite{ContKalin} and \cite{Kalin}. Here we modify some of these examples to present functionals which satisfy the above assumptions.
	
	\begin{example}\,

		\begin{enumerate}
			\item {\bf {Running Maximum}:} One of the basic examples of path-dependent functionals is the running maximum. Let $z\colon[0,T]\to \Rr_+$ define
			$$m(t, z):=\max_{s\in[0,t]}z(s)$$
			One can easily check that $m\in Lip(\Lambda^{1}_T, d_{\infty})$, moreover $m$ is boundedness preserving and horizontally differentiable with $\mathcal{ D}m=0.$ However, in general this functional may fail to be vertically differentiable at the point of the maximum of $z$. Following Dupire \cite{D09} we consider the following approximation of the running maximum
			\[
			M_{\epsi, h}(t, z):=
			\begin{cases}
			m(t, z)-\epsi,\, &0\leq z(t)\leq m(t, z)-2\epsi,\\
			m(t, z)-\epsi\,+\,h(z(t)-(m(t, z)-2\epsi)),\, &m(t, z)-2\epsi\leq z(t)\leq m(t, z),\\
			z(t),\, &z(t)\geq m(t, z).
			\end{cases}
			\]
			As shown in \cite{D09} for $h(z)=z^2/(4\epsi)$ the functional  $M_{\epsi, h}$ is twice vertically differentiable. More generally, if we take $h$ to be $C^2$ function with $$h(0)=h'(0)=h''(0)=0\text{ and }h(2\epsi)=\epsi, h'(2\epsi)=1, h''(2\epsi)=0$$ then $M_{\epsi, h}$ is $\mathbb{C}^{1,2}$ functional with
			\[
			\nabla_x M_{\epsi, h}(t, z):=
			\begin{cases}
			0\, &0\leq z(t)\leq m(t, z)-2\epsi,\\
			h'(z(t)-(m(t, z)-2\epsi)),\, &m(t, z)-2\epsi\leq z(t)\leq m(t, z),\\
			1,\, &z(t)\geq m(t, z),
			\end{cases}
			\]
			and
			\[
			\nabla^2_x M_{\epsi, h}(t, z):=
			\begin{cases}
			0\, &0\leq z(t)\leq m(t, z)-2\epsi,\\
			h''(z(t)-(m(t, z)-2\epsi)),\, &m(t, z)-2\epsi\leq z(t)\leq m(t, z),\\
			0,\, &z(t)\geq m(t, z).
			\end{cases}
			\]
			Furthermore, if $h''$ is Lipschitz continuous $M_{\epsi, h}$ satisfies the conditions of Assumption \ref{ass:sigma}, note however that the functional $M_{\epsi, h}$ is not Fr\'echet differentiable.
			
			The above functionals can be adapted for multidimensional paths. Let $\varphi\colon\Rr^d\to\Rr_+$ be a Lipschitz continuous functional, consider the following non-anticipative functional:
			\[
			b(t, x):=m(t,\,\varphi\circ x ).
			\]
			Under the assumptions on $\varphi$ one can easily check that  $b$ is boundedness preserving and $b\in Lip(\Lambda^{d}_T, d_{\infty})$. If the function $\varphi\in C^2$  with Lipschitz continuous derivatives then the functional 
			\[
			\sigma(t,x):=M_{\epsi, h}(t,\,\varphi\circ x )
			\]
			satisfies Assumption \ref{ass:sigma}.
			
			\item{\bf {Discrete time dependence}:} Let  $t_1<\ldots<t_m$ be given time-points in $[0,T]$ and let $\phi\colon[0,T]\times (\Rr^k)^m\to \Rr^N$ be a Lipschitz continuous function. Define a functional $\sigma\Lambda_T^k\to\Rr^N$ as follows
			\[
			\sigma(t, x):=\phi(t,\, x(t\wedge t_1),\,\ldots,\, x(t_m\wedge t)).
			\]
			Furthermore, if $\phi\in C^{1,2}(\colon[0,T]\times (\Rr^k)^m, \,\Rr^N)$  and $\nabla\phi\in C^{1,1}$ with Lipschitz continuous derivatives, then $\sigma$ satisfies the regularity properties of Assumption \ref{ass:sigma}. Indeed, it follows from the following formula for the vertical derivatives
			\[
			\nabla_{{x}}\sigma(t, x)=\sum_{i\colon t_i\geq t}\nabla_{x_i}\phi(t,\, x(t\wedge t_1),\,\ldots,\, x(t_m\wedge t)),
			\]
			and 
			\[
			\nabla^2_{x}\sigma(t, x)=\sum_{i, j\colon t_i, t_j\geq t}\nabla^2_{x_i x_j}\phi(t,\, x(t\wedge t_1),\,\ldots,\, x(t_m\wedge t)).
			\]
			\item{\bf {Integral dependence}:} Let $\psi\colon[0,T]\times D([0,T], \Rr^d)\times \Rr^d\to \Rr^N$ be a Lipschitz continuous functional, then
			\[
			F(t, x)=\int_0^t \psi(s, x_s, x(t)) ds,
			\]
			is in $ Lip(\Lambda^{d}_T, d_{\infty})$ and is horizontally differentiable with $\mathcal{D}F(t, x)=\psi(t, x_t, x(t))$. If furthermore $\phi$ is twice differentiable in the last variable then $F$ is of class $\mathbb{C}^{1,2}$ with the corresponding derivates
			\[
			\nabla_{{x}}F(t, x)=\int_0^t \nabla_{y}\psi(s, x_s, x(t)) ds,\,\,\text{ and }\,\,\nabla^2_{{x}}F(t, x)=\int_0^t \nabla^2_{y}\psi(s, x_s, x(t)) ds,
			\]
			where $\nabla_{y}$ denotes the derivative in the last variable of $\psi$.
			In particular, if $ \nabla_{y}\psi,  \nabla^2_{y}\psi$ are Lipschitz continuous then $F$ satisfies the regularity properties of Assumption \ref{ass:sigma}. Note that we do not require any differentiability for $\psi$ in the path $x_s$, thus in general $F$ is not  Fr\'echet differentiable in the path.
		\end{enumerate}
	\end{example}
	
	\begin{remark}
		It is worth to mention that with minor technical modifications, the results of the article would hold if we replace the Lipschitz continuity assumption with an assumption of H\H older continuity in the metric $d_{\infty}$ (as in  \cite{ContKalin} and \cite{Kalin}). However, we chose to work in the Lipschitz continuous setting to avoid unnecessary complications.
	\end{remark}
	
	\subsection{Proof of the main result}
	
	\begin{theorem}[Existence of solutions]\label{chp3theorem:existence} 
		Let $\bm{X}=(X, \mathbb{X})\in \mathcal{C}^{p-var}([0,T],\Rr^d)$ be rough path over $X$, with $p\in [2, \sqrt{2}+1)$. Assume Assumptions \ref{ass:b} and \ref{ass:sigma} hold. Then for any $\xi\in C^{p-var}([0,t_0],\Rr^k)$ there exist $(Y,Y')\in\mathscr{D}^{p, q_p}_{X }([0,T],\Rr^k)$ a solution to \eqref{chp3ode} in the sense of Definition \ref{chp3def.odesol}.
	\end{theorem}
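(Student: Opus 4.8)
The plan is to recast \eqref{chp3ode} as a fixed-point problem for the solution map
\[
\mathcal{M}\colon (Y,Y')\ \longmapsto\ \Big(\,\xi_{t_0}+\int_{t_0}^{\cdot}b(s,Y)\,\d s+\int_{t_0}^{\cdot}\sigma(s,Y)\,d\bm{X}\,,\ \ \sigma(\cdot,Y)\,\Big),
\]
the last integral being the rough integral of the controlled path $\big(\sigma(\cdot,Y),\nabla\sigma(\cdot,Y)Y'\big)$, and to produce a fixed point of $\mathcal{M}$ by a Schauder-type argument --- the rough-path counterpart of the classical proof of Peano's theorem. By Assumption~\ref{ass:sigma} and Theorem~\ref{chp3theorem:actiononpaths} the pair $\big(\sigma(\cdot,Y),\nabla\sigma(\cdot,Y)Y'\big)$ is a $(p,q_p)$-controlled path, so the rough integral exists by Theorem~\ref{chp3theorem:roughpathint}, and Theorem~\ref{chp02theorem:itegralcontrol} shows $\mathcal{M}(Y,Y')$ is again a $(p,q_p)$-controlled path (the finite-variation term $\int_{t_0}^{\cdot}b(s,Y)\,\d s$ only enters the remainder). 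A fixed point of $\mathcal{M}$ among controlled paths with $Y_{t_0}=\xi_{t_0}$ and $Y'_{t_0}=\sigma(t_0,\xi_{t_0})$ is precisely a solution in the sense of Definition~\ref{chp3def.odesol}.

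First I would reduce to a short interval. Since $b$ and $\sigma$ are Lipschitz for $\|\cdot\|_\infty$ (hence of linear growth), a Gronwall-type estimate gives an a priori bound $\sup_{[t_0,T]}|Y|\le C$ for every solution, so it suffices to build a solution on $[t_0,t_0+\tau]$ and restart finitely many times, concatenating controlled paths (the $q_p$-th power of the remainder variation being superadditive); the admissible length $\tau$ depends only on the coefficients and on how fast $\|X\|_{p,[\cdot,\cdot]}$ and $\|\mathbb{X}\|_{p/2,[\cdot,\cdot]}$ vanish on small intervals --- governed by the fixed continuous superadditive controls of $\bm{X}$ --- so $\tau$ can be taken uniform along $[t_0,T]$. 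On $[t_0,t_0+\tau]$ the invariant set cannot be a ball of the controlled-path norm (the estimates of Theorems~\ref{chp3theorem:actiononpaths} and~\ref{chp02theorem:itegralcontrol} carry a prefactor involving $\|\sigma\|_\infty$, $\|\nabla\sigma\|_\infty$ in front of $\|Y\|_p$, so a ball is not stable under $\mathcal{M}$); instead I would use a convex set of the form
\[
\mathcal{K}=\big\{(Y,Y')\in\mathscr{D}^{p,q_p}_{X}([t_0,t_0+\tau],\Rr^k)\ :\ Y_{t_0}=\xi_{t_0},\ Y'_{t_0}=\sigma(t_0,\xi_{t_0}),\ \|Y'\|_{p,[t,s]}\le A([t,s]),\ \|R^Y\|_{q_p,[t,s]}\le B([t,s])\big\},
\]
the two inequalities required for every $[t,s]\subseteq[t_0,t_0+\tau]$, where $A,B$ are fixed superadditive functions built from $\|X\|_{p,[t,s]}^p$, $\|\mathbb{X}\|_{p/2,[t,s]}^{p/2}$ and $|s-t|$ with prefactors to be chosen. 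Inserting the estimates of Theorem~\ref{chp3theorem:actiononpaths} for $R^{\sigma(\cdot,Y)}$ and $\nabla\sigma(\cdot,Y)Y'$, those of Theorem~\ref{chp02theorem:itegralcontrol} for the rough integral, and $\|\int_{t_0}^{\cdot}b(s,Y)\,\d s\|_{q_p,[t,s]}\le\|b\|_\infty|s-t|$, one gets for $(Z,Z')=\mathcal{M}(Y,Y')$ bounds in which every dangerous term carries a factor $\|X\|_{p,[t_0,t_0+\tau]}$, $\|\mathbb{X}\|_{p/2,[t_0,t_0+\tau]}$ or $\tau$; shrinking $\tau$ makes these small, and then the prefactors in $A,B$ can be fixed so that $\mathcal{M}(\mathcal{K})\subseteq\mathcal{K}$.

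It then remains to run a fixed-point theorem on $\mathcal{M}|_{\mathcal{K}}$. Since every $(Y,Y')\in\mathcal{K}$ obeys $|Y_{t,s}|\le\|Y'\|_\infty|X_{t,s}|+\|R^Y\|_{q_p,[t,s]}$, and analogous bounds for $Y'$ and $R^Y$, all dominated by a fixed continuous superadditive control to a positive power, $\mathcal{K}$ is equibounded and equicontinuous, hence precompact in the uniform topology; combining this with the uniform $p$- and $q_p$-variation bounds and the interpolation inequality $\|g\|_{p',[t,s]}\le(2\|g\|_\infty)^{1-p/p'}\|g\|_{p,[t,s]}^{p/p'}$ (and its analogue for the remainder), one sees that $\mathcal{K}$ is compact in the controlled-path space $\mathscr{D}^{p',q_{p'}}_{X}([t_0,t_0+\tau],\Rr^k)$ for any auxiliary exponent $p'\in(p,\sqrt{2}+1)$ --- which exists because $p<\sqrt{2}+1$ --- with $q_{p'}=(p')^2/(p'+1)$. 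All of the above (existence of the rough integral, the estimates, invariance of $\mathcal{K}$) holds verbatim with $p'$ in place of $p$, the only constraint being $(p')^{-1}+q_{p'}^{-1}>1$, i.e.\ $p'<\sqrt{2}+1$; and $\mathcal{M}$ is continuous on $\mathcal{K}$ for the $\mathscr{D}^{p',q_{p'}}_{X}$-topology: this follows from Corollary~\ref{chp3Cor:controlcont} and Lemma~\ref{chp3theorem:contrcont} applied with exponent $p'$ to $\sigma$ (they bound $\|\sigma(\cdot,Y_1)-\sigma(\cdot,Y_2)\|_{p'}$ and $\|\mathcal{R}^{\sigma}(Y_1)-\mathcal{R}^{\sigma}(Y_2)\|_{q_{p'}}$ by $\|Y_1-Y_2\|_\infty+\|Y_1-Y_2\|_{p'}$, with constants uniform over $\mathcal{K}$), from the uniform-norm Lipschitz continuity of $\nabla\sigma$ combined with the interpolation inequality (which absorbs the bounded $p$-variation of $\nabla\sigma(\cdot,Y_1)-\nabla\sigma(\cdot,Y_2)$ into its small uniform norm), and from the continuity of the rough-integral map in Theorem~\ref{chp02theorem:itegralcontrol}. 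Schauder's fixed-point theorem then yields a fixed point $(Y,Y')\in\mathcal{K}$; since it lies in $\mathcal{K}$ it is a genuine $(p,q_p)$-controlled path and solves \eqref{chp3ode} on $[t_0,t_0+\tau]$, and concatenating the finitely many pieces gives the solution on $[t_0,T]$.

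The step I expect to be hardest is the compactness one: the invariant set $\mathcal{K}$ must be carved out by variation seminorms dominated by a fixed continuous control rather than by an ordinary norm ball, so that it is simultaneously $\mathcal{M}$-invariant, convex, closed, and relatively compact in a strictly weaker but still admissible ($p'<\sqrt{2}+1$) controlled-path topology; checking that the estimates of Sections~\ref{chp:introrough}--\ref{chp:actions} close on $\mathcal{K}$ with small enough prefactors, and that continuity of $\mathcal{M}$ survives the minimal regularity of $\nabla\sigma$ (where only uniform-norm Lipschitz bounds are available, so interpolation between variation norms is indispensable), is where the bulk of the technical work lies, the concatenation step being routine.
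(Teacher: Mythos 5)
Your proposal has the same skeleton as the paper's argument — Schauder's fixed-point theorem applied to the solution map on a convex set that is compact in a weaker controlled-path space $\mathscr{D}^{p',q_{p'}}_X$ with $p<p'<\sqrt 2+1$, compactness via Arzel\`a--Ascoli and interpolation of variation seminorms, and continuity of the map from Lemma~\ref{chp3theorem:contrcont} and Corollary~\ref{chp3Cor:controlcont}. The correct identification of the admissible range of $p'$ and the role of interpolation is exactly what the paper uses (Proposition~\ref{prop:compactness}).

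There is, however, one genuine difference that affects whether the invariance step closes. The paper introduces a \emph{third} exponent $r$ with $p<r<p'$ and defines the invariant set $A$ by a H\"older-$\rho$ seminorm bound at exponent $\kappa=1/r$, with the threshold hard-coded to $1$. The point of this intermediate exponent is that the invariance estimate produces a bound of the form $C_{b,M}\,\delta^{1-\beta_\kappa}+C_{\xi,\sigma,M}\,\delta^{\min\{\nu,\beta_\kappa\}-\kappa}$, and the key exponent $\min\{\nu,\beta_\kappa\}-\kappa=\nu-\kappa=1/p-1/r$ is strictly positive \emph{precisely because} $r>p$; shrinking $\delta=\rho([0,T_0])$ then makes the bound $\le 1$, and no tuning of prefactors is needed. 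Your $\mathcal K$ is cut out by $p$-variation/$q_p$-variation bounds directly, i.e.\ at the borderline exponent where $\nu-\kappa=0$: the analogous estimate for the image $Z'=\Xi=\sigma(\cdot,Y)$ is $\|\Xi\|_{p,[t,s]}\le C_\sigma\bigl(\|Y'\|_\infty+O(\delta^{\cdot})\bigr)\rho([t,s])^{1/p}$, and the leading constant $C_\sigma\|Y'\|_\infty$ does \emph{not} decay as $\tau\downarrow 0$. So "shrinking $\tau$ makes these small" is not enough; invariance at exponent $p$ requires choosing the prefactors $C_A,C_B$ so that $C_A\ge C_\sigma\bigl(|\sigma(0,\xi)|+C_A\delta^{1/p}+C_B\delta^{1/q_p-1/p}+\dots\bigr)$, and similarly for $C_B$, which may well close but is noticeably more delicate than you indicate and is not the route the paper takes. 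In exchange, your choice of $\mathcal K$ would make the final bootstrap superfluous: because your $\mathcal K$ is already defined by $(p,q_p)$-conditions, the fixed point is automatically a $(p,q_p)$-controlled path, whereas the paper's $A$ only yields $r$-regularity ($r>p$) and therefore requires the additional argument at the end of the proof (using the integral equation and $q_{p'}<p$) to upgrade the fixed point to $\mathscr{D}^{p,q_p}_X$.

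Two smaller remarks. First, the a priori Gronwall bound you invoke to justify the concatenation is not present in (nor needed for) the paper's argument: the localization length $T_0$ depends only on $b,\sigma,\xi$ through quantities like $\|\sigma(0,\cdot)\|_\infty$ and the local Lipschitz constants, and the superadditive control $\rho_{\mathbf X}$, so the restart is direct without appealing to Gronwall-type estimates, which are nontrivial to justify in the rough-path setting. Second, closedness of $\mathcal K$ in the $\mathscr D^{p',q_{p'}}_X$-topology is a point worth stating explicitly: it holds because the $p$-variation and $q_p$-variation seminorms are lower semicontinuous with respect to uniform convergence, which is implied by convergence in $\mathscr D^{p',q_{p'}}_X$ on a compact interval.
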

	
	Our proof follows an argument similar to the ones in \cite{frizhairer} for the H\H older setting, however unlike them, instead of a contraction argument, we use the Schauder fixed point theorem (\cite[Theorem 11.1]{GilbTrud}).

	\begin{proof}
		Without loss of generality we assume $t_0=0.$ We will prove that there exists a small enough time $T_0$ (depending only on $b,\,\sigma$ and $X$), such that the solutions exists on $[0,T_0],$ then one can apply the result on the intervals $[T_0,2T_0],\, [2T_0, 3T_0], \ldots$ until it reaches $T$.
		For any $(Y,Y')\in\mathscr{D}^{p,q_p}_{X }([0, T_0],\Rr^k)$, let us denote
		\[
		(\Xi,\Xi')=(\sigma(\cdot ,Y), \nabla  \sigma(\cdot ,Y) Y'),
		\]
		and define a mapping $\mathcal{M}_{T_0}\colon \mathscr{D}^{p,q_p}_{X }([0, T_0],\Rr^k)\to \mathscr{D}^{p,q_p}_{X }([0, T_0],\Rr^k)$ by
		\[
		\mathcal{M}_{T_0}(Y,Y'):=\left(\xi_0+\int_0^{\cdot} b(t, Y)dt+\int_0^{\cdot} \Xi\cdot d\bm{X} ,\,\,\, \Xi\right).
		\]
		The statement of the theorem is equivalent to the fact that $\mathcal{M}_{T_0}$ has a fixed point.
		To be able to use a compactness argument we will prove the existence  first in a larger space; take $r, p'$ such that $p<r<p'<\sqrt{2}+1$ and $q_{p'}<p.$ We denote $\kappa=r^{-1}, \nu=p^{-1}$, $\nu'=p'^{-1}$ and   $\beta_\kappa=q_r^{-1}, \beta_\nu=q_p^{-1}$, $\beta_{\nu'}=q_{p'}^{-1}.$
		
		We will prove the existence of a solution in $\mathscr{D}^{p',q_{p'}}_{X }$ and then argue that it is also in the initial space $ \mathscr{D}^{p,q_p}_{X }([0, T_0],\Rr^k)$. 
		We consider the subspace $A$ in $\mathscr{D}^{r,q_{r}}_{X }$, in the neighbourhood of the controlled path with constant Gubinelli derivative:
		$$t\mapsto (\underbrace{\xi+b(0,\xi) t+\sigma(0, \xi)X(t)}_{:=\bar{\xi}(t)},\, \sigma(0,\xi)),$$
		with $Y_0=\xi,\quad Y'_0=\sigma(0,\xi)$. To be precise, we introduce the following superadditive function on the intervals of $[0,T]$:
		\[
		\rho_{_\mathbf{X}}([t,s]):=|s-t|+\|X\|_{p,[t,s]}^p+\|\mathbb{X}\|_{\frac{p}{2},[t,s]}^{\frac p 2}.
		\]
		Now, we can define  the following H\H older seminorm associated to $\rho=\rho_{_\mathbf{X}}:$
		\[
		\|(Z, Z')\|_{\kappa, \beta, \rho}:=\|Z'\|_{\kappa, \rho, [0,T]}\,+\|R ^Z\|_{\beta, \rho, [0,T]},
		\]
		where $\|W\|_{\gamma, \rho, [0,T]}:=\sup_{0\leq t<s\leq T} \frac{|W_{t,s}|}{\rho([t,s])^{\gamma}},\, \gamma\in (0, 1]$ note that $$\|W\|_{p, [t,s]}\leq \|W\|_{_{\nu, \rho, [t, s]}}\, \rho([t,s])^{\nu},\, \text{ with }\nu=p^{-1}.$$
		Define the following subset of $\mathscr{D}^{p',q_{p'}}_{X }$:
		\begin{equation}
		A=\{(Y,Y')\in\mathscr{D}^{r,q_{r}}_{X }\colon Y_0=\xi,\quad Y'_0=\sigma(0,\xi),\quad \|(Y-b(0,\xi) t,Y')\|_{\kappa, \beta_{\kappa},\rho}\leq 1\} \subset \mathscr{D}^{p',q_{p'}}_{X },\quad \label{eq.setA}
		\end{equation}
		where  $\kappa=r^{-1}$ and $\beta_\kappa=q_r^{-1}$.

		It is easily checked that $A$ is a closed, convex subset of $\mathscr{D}^{p',q_{p'}}_{X }$. Moreover, by the following proposition $A$ is compact in $\mathscr{D}^{p',q_{p'}}_{X }$.
		\begin{proposition}\label{prop:compactness}
			Let $\alpha, \beta\in (0,1)$ and $p, q>1$ be such that $p>\frac 1 {\alpha},\, q>\frac 1 {\beta},$ and $\rho$ be a continuous superadditive function. Then for any $M>0$, the set
			\[
			\left\{
			(Y, Y')\in C([0,T], \Rr^d\times \Rr^{k\times d})\colon |Y(0)|\,+\,|Y'(0)|\,+\,\|Y, Y'\|_{\alpha, \beta, \rho}\leq M
			\right\},
			\]
			is compact in $\mathscr{D}^{p,q}_{X }.$
		\end{proposition}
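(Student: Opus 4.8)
The plan is to combine an Arzel\`a--Ascoli compactness argument in the uniform topology with an interpolation inequality that upgrades uniform convergence to convergence in $p$- and $q$-variation. The elementary fact doing the work is: if a path $W$ has finite $\alpha$-H\H older norm relative to the superadditive control $\rho$, i.e. $\|W\|_{\alpha,\rho}:=\sup_{t<s}|W_{t,s}|\,\rho([t,s])^{-\alpha}<\infty$, and $p>1/\alpha$, then for every partition $\pi$ of $[0,T]$,
\[
\sum_{[t_k,t_{k+1}]\in\pi}|W_{t_k,t_{k+1}}|^p\le \|W\|_{\infty}^{\,p-1/\alpha}\|W\|_{\alpha,\rho}^{\,1/\alpha}\sum_{[t_k,t_{k+1}]\in\pi}\rho([t_k,t_{k+1}])\le \|W\|_{\infty}^{\,p-1/\alpha}\|W\|_{\alpha,\rho}^{\,1/\alpha}\,\rho([0,T]),
\]
the last step using superadditivity of $\rho$; the same estimate holds verbatim for two-parameter increments $R_{t,s}$ with $\|R\|_{\alpha,\rho}$ in place of $\|W\|_{\alpha,\rho}$. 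Thus $\|W\|_{p,[0,T]}\le \|W\|_{\infty}^{1-1/(p\alpha)}\|W\|_{\alpha,\rho}^{1/(p\alpha)}\rho([0,T])^{1/p}$, so a uniformly convergent sequence whose $\alpha$-H\H older norms stay bounded converges in $p$-variation (and likewise for $\beta,q$). This is precisely why the strict inequalities $p>1/\alpha$, $q>1/\beta$ are needed — at equality one only recovers boundedness, not convergence, in variation norm.

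Next, take a sequence $(Y^n,(Y^n)')$ in the set. Since $\rho$ is continuous with $\rho([t,t])=0$, the bound $\|(Y^n)'\|_{\alpha,\rho}\le M$ together with $|(Y^n)'(0)|\le M$ makes $\{(Y^n)'\}$ uniformly bounded and equicontinuous, and $\|R^{Y^n}\|_{\beta,\rho}\le M$ makes $\{R^{Y^n}\}$ uniformly bounded and equicontinuous on the simplex $\{t\le s\}$; writing $Y^n_{t,s}=(Y^n)'_tX_{t,s}+R^{Y^n}_{t,s}$ with $X$ fixed and continuous shows $\{Y^n\}$ is uniformly bounded and equicontinuous too. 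By Arzel\`a--Ascoli there is a subsequence along which $Y^n\to Y$, $(Y^n)'\to Z$ uniformly on $[0,T]$ and $R^{Y^n}\to\tilde R$ uniformly on $\{t\le s\}$. Passing to the limit in $Y^n_{t,s}=(Y^n)'_tX_{t,s}+R^{Y^n}_{t,s}$ identifies $\tilde R$ with $R^Y:=Y_{\cdot,\cdot}-Z_\cdot X_{\cdot,\cdot}$, so $Z=:Y'$ is a Gubinelli derivative for $Y$; lower semicontinuity of $\|\cdot\|_{\alpha,\rho}$ and $\|\cdot\|_{\beta,\rho}$ under uniform convergence gives $|Y(0)|+|Y'(0)|+\|Y,Y'\|_{\alpha,\beta,\rho}\le M$, so the limit lies in the set (which is therefore closed), and applying the interpolation inequality to $Y'$ and $R^Y$ themselves shows $(Y,Y')\in\mathscr{D}^{p,q}_{X}$.

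Finally, to upgrade to convergence in $\mathscr{D}^{p,q}_{X}$ rather than merely uniformly, apply the interpolation inequality to $W=(Y^n)'-Y'$ (exponents $\alpha,p$) and to $R=R^{Y^n}-R^Y$ (exponents $\beta,q$): their $\alpha$- and $\beta$-H\H older norms relative to $\rho$ are bounded by $2M$ for all $n$, while $\|(Y^n)'-Y'\|_{\infty}\to0$ and $\|R^{Y^n}-R^Y\|_{\infty}\to0$ from the Arzel\`a--Ascoli step, hence $\|(Y^n)'-Y'\|_{p,[0,T]}\to0$ and $\|R^{Y^n}-R^Y\|_{q,[0,T]}\to0$; together with $Y^n(0)\to Y(0)$, $(Y^n)'(0)\to Y'(0)$ this gives $\|(Y^n,(Y^n)')-(Y,Y')\|_{\mathscr{D}^{p,q}_{X}}\to0$. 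So every sequence in the set has a subsequence converging in $\mathscr{D}^{p,q}_{X}$ to a point of the set, i.e. the set is (sequentially, hence) compact. The only genuinely delicate point is this interpolation mechanism and its reliance on the strict exponent inequalities; the Arzel\`a--Ascoli part is routine once the continuity of $\rho$ is used to turn the H\H older bounds into an honest uniform modulus of continuity.
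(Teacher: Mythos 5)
Your proof is correct and follows essentially the same route as the paper's: Arzel\`a--Ascoli to extract a uniformly convergent subsequence, identification of the limit's Gubinelli derivative, and an interpolation between the $\infty$-norm and the $\rho$-H\H older norm (using the strict inequalities $p>1/\alpha$, $q>1/\beta$) to upgrade uniform convergence to convergence in $p$- and $q$-variation. The only cosmetic difference is that you pass directly from the $\alpha$-H\H older bound and uniform convergence to the $p$-variation estimate, whereas the paper first interpolates to an intermediate exponent $\alpha'=p^{-1}<\alpha$ and then converts to $p$-variation via $\|W\|_{p,[0,T]}\le\|W\|_{\alpha',\rho}\,\rho([0,T])^{\alpha'}$; the two bookkeepings are equivalent.
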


		We divide the proof in two steps, where we check that the assumptions of Schauder theorem hold for $\mathcal{M}_{T}$ on the set $A\subset\mathscr{D}^{p',q_{p'}}_{X }$ and for small enough $T$. We already noted that  $A$ defined above is compact and convex in $\mathscr{D}^{p',q_{p'}}_{X }$, thus it remains to check the properties of $\mathcal{M}_{T}$.
		\vskip 0.5cm
		\begin{claim}[Invariance]
			There exist $\delta\in (0, 1)$ (depending only on global properties of $\xi, b, \text{ and }\sigma$) such that if $\rho([0,T_0])<\delta$, then the set $A$ defined by \eqref{eq.setA} is invariant under  $\mathcal{M}_{T_0}$: $$\mathcal{M}_{T_0}(A)\subset A.$$ 
		\end{claim}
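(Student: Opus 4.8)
The plan is to show that $\mathcal{M}_{T_0}$ maps $A$ into itself by estimating the three components of $\mathcal{M}_{T_0}(Y,Y') = (\xi_0 + \int_0^\cdot b\, dt + \int_0^\cdot \Xi\, d\bm{X},\ \Xi)$ in the seminorm $\|\cdot\|_{\kappa,\beta_\kappa,\rho}$ and checking the boundary conditions. First I would observe that if $(Y,Y')\in A$ then by the triangle inequality and the definition of $A$, we control $\|Y\|_{r,[0,T_0]}$ and $\|Y'\|_{r,[0,T_0]}$ (and hence $\|Y'\|_\infty$) in terms of $\rho([0,T_0])^\kappa$, $|\xi|$, $|\sigma(0,\xi)|$, $\|b(0,\xi)\|$ and $\|X\|_{p,[0,T_0]}$; in particular these are all bounded by a constant $R$ depending only on the global data once $\rho([0,T_0])\le 1$. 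This gives the uniform bound $M$ needed to invoke Theorem~\ref{chp3theorem:actiononpaths} for $F=\sigma$, yielding $(\Xi,\Xi')=(\sigma(\cdot,Y),\nabla\sigma(\cdot,Y)Y')\in\mathscr{D}^{r,q_r}_X$ with
\[
\|\Xi'\|_{r,[t,s]}\le C_{\sigma,M}\big(|s-t|+\|Y\|_{r,[t,s]}+\|Y'\|_{r,[t,s]}\big),\qquad
\|R^\Xi\|_{q_r,[t,s]}\le C_{\sigma,T_0}\big(\|R^Y\|_{q_r,[t,s]}+\|Y\|_{r,[t,s]}^{1+\kappa}+|s-t|\big).
\]

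Next I would feed $(\Xi,\Xi')$ into Theorem~\ref{chp02theorem:itegralcontrol} (the rough-integral bound, with exponents $r,q_r$, valid since $r<\sqrt2+1$ so $r^{-1}+q_r^{-1}>1$) to control $Z:=\int_0^\cdot\Xi\,d\bm{X}$ together with its Gubinelli derivative $\Xi$: the estimate there gives $R^Z_{t,s}=\int_t^s\Xi\,d\bm X-\Xi_tX_{t,s}$ with $|R^Z_{t,s}|\lesssim \|\Xi\|_\infty\|\mathbb X\|_{p/2,[t,s]}+\|R^\Xi\|_{q_r,[t,s]}\|X\|_{r,[t,s]}+\|\Xi'\|_{r,[t,s]}\|\mathbb X\|_{p/2,[t,s]}$. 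The drift term $\int_0^\cdot b(t,Y)\,dt$ contributes a Lipschitz (hence finite $1$-variation) path of variation $\le C_b\,|s-t|$ since $b$ is boundedness-preserving and the paths stay in a fixed compact; crucially its \emph{remainder} relative to $X$ absorbs into $R^Z$ after subtracting the linear drift $b(0,\xi)t$, which is exactly why $A$ is defined as a neighbourhood of $\bar\xi - b(0,\xi)t$ rather than of $\bar\xi$. Converting all $p$- and $q$-variation norms on a subinterval $[t,s]$ into the $\rho$-Hölder seminorm via $\|W\|_{p,[t,s]}\le\|W\|_{\nu,\rho,[t,s]}\rho([t,s])^\nu$ and similar, and using superadditivity of $\rho$, I would assemble the bound
\[
\|(\mathcal{M}_{T_0}(Y,Y')_1 - b(0,\xi)t,\ \Xi)\|_{\kappa,\beta_\kappa,\rho}
\le C_{\xi,b,\sigma,\mathbf X}\,\big(\rho([0,T_0])^{\theta}+\rho([0,T_0])^{\theta'}\big)
\]
for some strictly positive exponents $\theta,\theta'$ (the positivity comes from the slack $\kappa<\nu$, i.e. $r>p$, and from $q_r$-variation being strictly better than needed). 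Since the boundary conditions $Z_0=\xi$, $Z'_0=\Xi_0=\sigma(0,\xi)$ hold by construction, choosing $\delta$ small enough that the right-hand side is $\le 1$ gives $\mathcal{M}_{T_0}(A)\subset A$.

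The main obstacle is bookkeeping the exponents so that every term in the final estimate carries a strictly positive power of $\rho([0,T_0])$: the naive application of Theorems~\ref{chp02theorem:itegralcontrol} and~\ref{chp3theorem:actiononpaths} produces terms like $\|R^Y\|_{q_r,[t,s]}\|X\|_{r,[t,s]}$ and $\|Y\|_{r,[t,s]}^{1+\kappa}$ whose "free" powers of $\rho$ are $\beta_\kappa^{-1}\cdot 0 + \kappa$ type contributions, and one must verify that after dividing by $\rho([t,s])^{\beta_\kappa}$ (resp. $\rho([t,s])^\kappa$) and taking the sup there is still a genuine positive power of $\rho([0,T_0])$ left over to be made small. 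This is where the specific choice $q_r = r^2/(r+1)$ and the inequalities $p<r<p'<\sqrt2+1$, $q_{p'}<p$ are used; I would isolate this as a short arithmetic lemma on the exponents and then the rest is a routine, if lengthy, chaining of the two main estimates together with the Lipschitz/boundedness-preserving bounds on $b$ and $\sigma$.
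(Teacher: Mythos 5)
Your proposal follows essentially the same route as the paper's proof: extract a uniform bound $M$ from membership in $A$, invoke Theorem~\ref{chp3theorem:actiononpaths} to control $(\Xi,\Xi')$, feed the result into the rough-integral remainder estimate from Theorem~\ref{chp02theorem:itegralcontrol}, treat the drift term separately after subtracting $b(0,\xi)t$, convert everything to the $\rho$-H\"older seminorm, and exploit the slack $p<r<\sqrt{2}+1$ together with $q_r=r^2/(r+1)$ to leave strictly positive powers of $\delta$. Two small slips worth noting: in your display for $|R^Z_{t,s}|$ the first factor should be $\|\Xi'\|_\infty$, not $\|\Xi\|_\infty$ (it comes from the $Y'_t\mathbb{X}_{t,s}$ term in Theorem~\ref{chp02theorem:itegralcontrol}), and the drift bound actually uses the $d_\infty$-Lipschitz continuity of $b$ from Assumption~\ref{ass:b} to control $\|b(\cdot,Y)-b(0,\xi)\|_\infty$ on $A$, not merely boundedness-preservation.
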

		
		Let $(Y,Y')\in A$, we need to prove that $\mathcal{M}_{T_0}(Y,Y')\in A.$ First note that by definition of $A$ and the path $Y$
		\ba\nonumber
		|Y'_0|+\|(Y,Y')\|_{\kappa,\beta_{\kappa},\rho}\leq |Y'_0|+\|b(0, \xi) t\|_{\kappa, \rho}+\|(Y-b(0,\xi) t,Y')\|_{\kappa,\beta_{\kappa},\rho}\\
		\leq \|\sigma(0,\cdot)\|_{\infty}+\|b(0,\cdot)\|_{\infty}+1:=M.\nonumber
		\ea
		We obviously have
		\[
		\mathcal{M}_{T_0}(Y,Y')(0)=\xi,\, \mathcal{M}_{T_0}(Y,Y')'(0)=\Xi_0=\sigma(0, \xi),
		\]
		thus it remains to check that for small $\delta$, we have $\left\|\mathcal{M}_{T_0}(Y,Y')-(b(0, \xi)t, 0)\right\|_{\kappa, \beta_{\kappa}, \rho}\leq 1$. For that note
		\ba
		\left\|\mathcal{M}_{T_0}(Y,Y')-(b(0, \xi)t, 0)\right\|_{\kappa,\beta_{\kappa},\rho}\leq \left\|\int_0^{\cdot}b(t, Y)-b(0, \xi) dt\right\|_{q_{r}, \rho, [0,T_0]}\,+\,\left\|\int_0^{\cdot}\Xi d\bm{X}, \Xi\right\|_{\kappa,\beta_{\kappa},\rho}.\nonumber
		\ea
		For the first term on the right-hand side
		\[
		\left\|\int_0^{\cdot}b(t, Y)-b(0, \xi) dt\right\|_{\beta_{\kappa}, \rho, [0,T_0]}\leq T_0^{1-\beta_{\kappa}}\|b(\cdot, Y)-b(0, \xi)\|_{\infty}\leq C_{b,M}\delta^{1-\beta_{\kappa}}.
		\]
		where the last inequality follows from $d_{\infty}$ Lipschitz continuity of $b$ and $\|Y-\xi\|_\infty\leq \|Y\|_{r, [0, T_0]}\leq C_{M}$.
		To estimate the second term,  let $(Z, Z'):=\left(\int_0^{\cdot}\Xi d\bm{X}, \Xi\right)$, then the estimate \ref{eq:RZ} from the proof of Theorem \ref{chp02theorem:itegralcontrol} implies:
		\begin{align}\label{eq:RZ0}
		|R^Z_{t,s}|\leq \|\Xi'\|_{\infty}\|\mathbb{X}\|_{r/2,[t,s]}+
		C\left(\|X\|_{r,[t,s]}\|R^{\Xi}\|_{q_{r},[t,s]}+\|\mathbb{X}\|_{r/2,[t,s]}\|\Xi'\|_{r, [t,s]}\right)\nonumber\\
		\leq C\left(\|\Xi'\|_{\infty}+\|\Xi'\|_{r,[0, T_0]}\right)\|\mathbb{X}\|_{r/2,[t,s]}+C\|R^{\Xi}\|_{q_{r},[t,s]}\|X\|_{r,[t,s]}.
		\end{align}

		To estimate the first term in \eqref{eq:RZ0} note that from the first inequality of Theorem \ref{chp3theorem:actiononpaths}
		\eq{\label{chp3eq:Xixiineq}
			\|\Xi'\|_{\infty}+\|\Xi'\|_{r,[0, T_0]}\leq |\Xi'_0|+2\|\Xi'\|_{r,[0,T_0]}\leq\nonumber\\
			|\Xi'_0|+ C_{M,{\sigma}}(T_0+\|Y\|_{r,[0,T_0]}\, +\, \|Y'\|_{r, [0,T_0]})\leq \nonumber\\
			|\nabla  \sigma (0, \xi)\sigma(0, \xi)|+
			C_{M,{\sigma}}\left(1+|Y'_0|+\|(Y,Y')\|_{\kappa, \beta_{\kappa},\rho}\right)\leq C_{_{\xi, \sigma, M}},
		}
		For the second term in \eqref{eq:RZ0}, we use the second inequality of Theorem \ref{chp3theorem:actiononpaths}, which gives
		\eq{\label{chp3eq:Xixiineq2}\nonumber
			\|R^{\Xi}\|_{q_r, [t,s]}
			\leq C_{\sigma,M}(
			\|R^Y\|_{q_r, [t,s]}+ \|Y\|_{r, [t,s]}^{1+\kappa}+|s-t|)\nonumber\\
			\leq C_{\sigma,M}(
			\|R^Y\|_{_{\beta_{\kappa}}, \rho}\rho([t,s])^{\beta_{\kappa}}+ \|Y\|_{_{\kappa, \rho}}^{1+\kappa} \rho([t,s])^{\beta_{\kappa}}+\rho([t,s]))\leq C_{_{\sigma, M}}\,\rho([t,s])^{\beta_{\kappa}}.
		}
		Consequently, from \eqref{eq:RZ0}
		\begin{align*}
		|R^Z_{t,s}|\leq  C_{_{\xi, \sigma, M}}\|\mathbb{X}\|_{r/2,[t,s]}+C_{_{\sigma, M}}\,\|X\|_{r,[t,s]}\rho([t,s])^{\beta_{\kappa}}\leq C_{_{\xi, \sigma, M}}\left( \rho([t,s])^{2\kappa} +\rho([t,s])^{\kappa+\beta_{\kappa}} \right),
		\end{align*}
		\eq{\label{eq:RZbdd}
			\|R^Z\|_{\beta_{\kappa}, \rho}\leq C_{_{\xi, \sigma, M}}\delta^{2\kappa-\beta_{\kappa}}.
		}
		Using Lemma \ref{chp3theorem:controll}, as in the proof of \eqref{chp3eq:DFYnu}, we get
		$$|\Xi_{t,s}|\leq \|\Xi\|_{r,[t,s]}=\|{\sigma}(\cdot, Y)\|_{r,[t,s]}\leq C_{_{\sigma, M}}\Big(|s-t|+ \|Y\|_{r,[t,s]}\Big).$$ 
		From the identity $(Y)_{t,s} =Y'(t) (X)_{t, s}\,+\, R^Y_{t,s}$ through the chain of inequalities:
		\begin{align*}
		\|Y\|_{r, [t,s]}\nonumber\leq \|Y'\|_{\infty}\|X\|_{p,[t,s]}+\|R^{Y}\|_{q_{r},[t, s]}
		\leq \rho([t,s])^{\min\{\nu, \beta_{\kappa}\}}\Big( \|Y'\|_{\infty}\, +\,\|R^{Y}\|_{\beta_{\kappa}, \rho}\Big)\nonumber\\
		\leq \Big( |Y'_0|+\|(Y,Y')\|_{\kappa,\beta_{\kappa}, \rho}\Big)\rho([t,s])^{\min\{\nu, \beta_{\kappa}\}}
		\leq C_{_{\sigma, M}}\rho([t,s])^{\min\{\nu, \beta_{\kappa}\}}.
		\end{align*}
		The previous two inequalities yield
		\begin{align}\label{chp3eq:ineqYk}
		\|\Xi\|_{\kappa, \rho}\leq C_{_{\sigma, M}}\delta^{\min\{\nu, \beta_{\kappa}\}-\kappa}
		\end{align}
		Finally \eqref{eq:RZbdd} and \eqref{chp3eq:ineqYk} give
		\ba
		\left\|\mathcal{M}_{T_0}(Y,Y')-(b(0, \xi)t, 0)\right\|_{\kappa,\beta_{\kappa}, \rho}\leq \left\|\int_0^{\cdot}b(t, Y)-b(0, \xi) dt\right\|_{\beta_{\kappa}, \rho}\,+\,\left\|\int_0^{\cdot}\Xi d\bm{X}, \Xi\right\|_{\kappa,\beta_{\kappa}, \rho}\nonumber\\
		\leq C_{_{b, M}} \delta^{1-\beta_{\kappa}}+C_{_{\xi, \sigma, M}}\delta^{\min\{\nu,\beta_{\kappa}\}-\kappa}.\nonumber
		\ea
		It remains to take $T_0$ small enough so that $C_{_{b, M}} \delta^{1-\beta_{\kappa}}+C_{_{\xi, \sigma, M}}\delta^{\min\{\nu,\beta_{\kappa}\}-\kappa}\leq 1,$ to get $\mathcal{M}_{T_0}(Y,Y')\in A$.
		
		\begin{claim}[Continuity]
			The map
			$$
			\mathcal{M}_{T_0}\colon \mathscr{D}^{p', q_{p'}}_{X }\to \mathscr{D}^{p', q_{p'}}_{X }.
			$$ is continuous.
		\end{claim}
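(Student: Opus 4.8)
The plan is to verify sequential continuity of $\mathcal{M}_{T_0}$ on $A$, which is all the Schauder argument requires: given $(Y_n,Y_n')\to(Y,Y')$ in $\mathscr{D}^{p',q_{p'}}_X$ with all terms in $A$, I must show $\mathcal{M}_{T_0}(Y_n,Y_n')\to\mathcal{M}_{T_0}(Y,Y')$ in $\mathscr{D}^{p',q_{p'}}_X$. Writing $\Xi_n:=\sigma(\cdot,Y_n)$ and $\Xi_n':=\nabla\sigma(\cdot,Y_n)Y_n'$, and using linearity of the Lebesgue and the rough integrals together with $Y_n(0)=\xi$ for all $n$,
\[
\mathcal{M}_{T_0}(Y_n,Y_n')-\mathcal{M}_{T_0}(Y,Y')=\Big(D_n+\int_0^{\cdot}(\Xi_n-\Xi)\,d\bm X,\ \Xi_n-\Xi\Big),\qquad D_n:=\int_0^{\cdot}\!\big(b(t,Y_n)-b(t,Y)\big)\,dt,
\]
where the second integral is the rough integral of the controlled path $(\Xi_n-\Xi,\Xi_n'-\Xi')\in\mathscr{D}^{p',q_{p'}}_X$ (Theorem \ref{chp3theorem:actiononpaths}). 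Since $\|\cdot\|_{\mathscr{D}^{p',q_{p'}}_X}$ is subadditive, it is enough to show that each of the two summands tends to $0$ in $\mathscr{D}^{p',q_{p'}}_X$.

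First I would record the consequences of the assumed convergence. From $(Y_n-Y)_{t,s}=(Y_n'-Y')_tX_{t,s}+(R^{Y_n}-R^{Y})_{t,s}$ and $q_{p'}<p'$ one gets $\|Y_n-Y\|_{\infty,[0,T_0]}+\|Y_n-Y\|_{p',[0,T_0]}\to0$ and $\|Y_n'-Y'\|_{\infty,[0,T_0]}\to0$; and since the whole sequence lies in $A$, the H\"older-type bounds in \eqref{eq.setA} give uniform estimates $\|Y_n\|_{r,[0,T_0]}+\|Y_n'\|_{r,[0,T_0]}+\|R^{Y_n}\|_{q_r,[0,T_0]}\le C$. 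The drift summand $(D_n,0)$ is easy: its Gubinelli derivative vanishes and $\|R^{D_n}\|_{q_{p'}}=\|D_n\|_{q_{p'}}\le\|D_n\|_{1}\le T_0\|b(\cdot,Y_n)-b(\cdot,Y)\|_{\infty}\le T_0K\|Y_n-Y\|_{\infty}\to0$ by Assumption \ref{ass:b}. For the rough-integral summand, Theorem \ref{chp02theorem:itegralcontrol} applied to $(\Xi_n-\Xi,\Xi_n'-\Xi')$ bounds its $\mathscr{D}^{p',q_{p'}}_X$-norm by a constant (depending only on $\bm X$) times $\|\Xi_n-\Xi\|_{p'}+\|\Xi_n'-\Xi'\|_{\infty}+\|\Xi_n'-\Xi'\|_{p'}+\|R^{\Xi_n}-R^{\Xi}\|_{q_{p'}}$ (all norms on $[0,T_0]$), so everything reduces to showing these four terms vanish in the limit; note $\|\Xi_n'-\Xi'\|_{\infty}\to0$ follows from $\|\Xi_n'-\Xi'\|_{p'}\to0$ since the sequence agrees at time $0$.

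I would handle two of the three variation terms using what has already been proved for $\sigma$. First, $\|\Xi_n-\Xi\|_{p'}=\|\sigma(\cdot,Y_n)-\sigma(\cdot,Y)\|_{p'}\to0$ by Corollary \ref{chp3Cor:controlcont}. Second, writing $R^{\sigma(\cdot,Y)}_{t,s}=\mathcal{R}^{\sigma}_{t,s}(Y)+\nabla\sigma(t,Y_t)R^{Y}_{t,s}$ as in \eqref{chp3eq:RFts}, the difference of the $\mathcal{R}^{\sigma}$-parts goes to $0$ in $q_{p'}$-variation by Lemma \ref{chp3theorem:contrcont}, while the difference of the $\nabla\sigma(\cdot,Y)R^{Y}$-parts, split as $(\nabla\sigma(\cdot,Y_n)-\nabla\sigma(\cdot,Y))R^{Y_n}+\nabla\sigma(\cdot,Y)(R^{Y_n}-R^{Y})$, has $q_{p'}$-variation $\lesssim K\|Y_n-Y\|_{\infty}\|R^{Y_n}\|_{q_{p'}}+\|\nabla\sigma\|_{\infty}\|R^{Y_n}-R^{Y}\|_{q_{p'}}\to0$; hence $\|R^{\Xi_n}-R^{\Xi}\|_{q_{p'}}\to0$.

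The hard part will be the last term, $\|\Xi_n'-\Xi'\|_{p'}=\|\nabla\sigma(\cdot,Y_n)Y_n'-\nabla\sigma(\cdot,Y)Y'\|_{p'}$. Splitting it as $(\nabla\sigma(\cdot,Y_n)-\nabla\sigma(\cdot,Y))Y_n'+\nabla\sigma(\cdot,Y)(Y_n'-Y')$ and using $\|fg\|_{p'}\le\|f\|_{\infty}\|g\|_{p'}+\|f\|_{p'}\|g\|_{\infty}$, the second piece tends to $0$ since $\|Y_n'-Y'\|_{p'}$ and $\|Y_n'-Y'\|_{\infty}$ do; in the first piece all factors are controlled by the uniform bounds of the second paragraph and by $\|\nabla\sigma(\cdot,Y_n)-\nabla\sigma(\cdot,Y)\|_{\infty}\le K\|Y_n-Y\|_{\infty}\to0$, except the factor $\|\nabla\sigma(\cdot,Y_n)-\nabla\sigma(\cdot,Y)\|_{p',[0,T_0]}$. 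Corollary \ref{chp3Cor:controlcont} cannot be invoked for it, because Assumption \ref{ass:sigma} places $\nabla\sigma$ only in $\mathbb{C}^{1,1}_b$, one vertical derivative short of the hypotheses of that corollary (and of Lemma \ref{chp3theorem:contrcont}). The device I would use is interpolation against the auxiliary exponent $r\in(p,p')$: since $r<p'$, for any path $g$ one has $\|g\|_{p',[0,T_0]}\le\|g\|_{\infty,[0,T_0]}^{1-r/p'}\|g\|_{r,[0,T_0]}^{r/p'}$; applying this to $g=\nabla\sigma(\cdot,Y_n)-\nabla\sigma(\cdot,Y)$, together with $\|g\|_{\infty}\le K\|Y_n-Y\|_{\infty}\to0$ and the uniform bound $\|\nabla\sigma(\cdot,Y_n)\|_{r,[0,T_0]}\le c_\sigma(\|Y_n\|_{r,[0,T_0]}+T_0)\le C$ (valid because $(Y_n,Y_n')\in A$; cf. \eqref{chp3eq:DFYnu}), yields $\|\nabla\sigma(\cdot,Y_n)-\nabla\sigma(\cdot,Y)\|_{p'}\to0$. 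This is exactly where the setup's choice $p<r<p'<\sqrt{2}+1$ earns its keep. Collecting these estimates gives $\mathcal{M}_{T_0}(Y_n,Y_n')\to\mathcal{M}_{T_0}(Y,Y')$ in $\mathscr{D}^{p',q_{p'}}_X$, which proves the claim.
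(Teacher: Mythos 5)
Your decomposition and the bulk of the estimates match the paper's proof closely: both bound $\|\mathcal{M}_{T_0}(Y_1,Y_1')-\mathcal{M}_{T_0}(Y_2,Y_2')\|_{p',q_{p'},X}$ via Theorem \ref{chp02theorem:itegralcontrol} and reduce everything to $p'$- and $q_{p'}$-variation bounds on $\sigma(\cdot,Y_1)-\sigma(\cdot,Y_2)$, $\nabla\sigma(\cdot,Y_1)Y_1'-\nabla\sigma(\cdot,Y_2)Y_2'$ and $R^{\sigma(\cdot,Y_1)}-R^{\sigma(\cdot,Y_2)}$, handled via Corollary \ref{chp3Cor:controlcont}, Lemma \ref{chp3theorem:contrcont}, and the product rule for variations. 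The drift term is disposed of the same way.

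Where you genuinely diverge is the estimate of $\|\nabla\sigma(\cdot,Y_1)-\nabla\sigma(\cdot,Y_2)\|_{p'}$. The paper simply invokes Corollary \ref{chp3Cor:controlcont} with $F=\nabla\sigma$, which (as you correctly observe) would require $\nabla\sigma\in\mathbb{C}^{1,2}_b$, $\nabla^2\sigma\in\mathbb{C}^{1,1}_b$ and $\mathcal{D}\nabla^2\sigma,\nabla^3\sigma\in Lip(\Lambda^d_T,\|\cdot\|_\infty)$ — one degree of vertical regularity beyond what Assumption \ref{ass:sigma} actually supplies. Your interpolation device, $\|g\|_{p'}\lesssim\|g\|_\infty^{1-r/p'}\|g\|_r^{r/p'}$ applied to $g=\nabla\sigma(\cdot,Y_n)-\nabla\sigma(\cdot,Y)$, together with the Lipschitz bound $\|g\|_\infty\le K\|Y_n-Y\|_\infty$ and the uniform $r$-variation bound for $\nabla\sigma(\cdot,Y_n)$ from \eqref{chp3eq:DFYnu}, gets the job done under the stated hypotheses. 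This is a real improvement in rigor, and it is a nice observation that this is precisely where the auxiliary exponent $r\in(p,p')$ is needed. The trade-off is that your argument yields continuity only on subsets where one has uniform $\mathscr{D}^{r,q_r}_X$ bounds — in particular on $A$ — rather than on all of $\mathscr{D}^{p',q_{p'}}_X$ as the claim is literally phrased. Since Schauder is applied on $A$, this restriction is harmless, but you should flag explicitly that your sequential continuity is established only on $A$ (and that sequential continuity on the metrizable set $A$ is equivalent to continuity there). Aside from that caveat, the proof is correct and, in this one step, more careful than the paper's.
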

		Let $(Y_1,Y_1'),\, (Y_2,Y_2')\in \mathscr{D}^{p',q_{p'}}_{X }$, and $\Delta(s)=\sigma(s,Y_{1})-\sigma(s,Y_{2}),\, \Delta'(s)=\nabla\sigma(s,Y_{1})-\nabla\sigma(s,Y_{2})$. Let $R, M>0$ be such that
		\[
		|Y_1(0)|+\|Y_1\|_{p',[0,T_0]} +\|R^{Y_1}\|_{q_{p'},[0,T_0]},\,|Y_2(0)|+\|Y_2\|_{p', [0,T_0]}\leq R,
		\]
		and
		\[
		\|Y_1-Y_2\|_{\infty}+\|Y_1-Y_2\|_{p,[0,T_0]}\leq M.
		\]
		We can estimate the distance between the values $\mathcal{M}_{T_0}(Y_1,Y_1')$ and $\mathcal{M}_{T_0}(Y_2,Y_2')$  by Theorem \ref{chp02theorem:itegralcontrol}

		\eq{\label{chp3eq:Mtbound}
			\left\|\mathcal{M}_{T_0}(Y_1,Y_1')-\mathcal{M}_{T_0}(Y_2,Y_2')\right\|_{p', q_{p'}, X}=\left\|\int_0^{\cdot}\Delta d\mathbf{X}, \Delta\right\|_{p', q_{p'}, X}\nonumber\leq
			\\\|\Delta\|_{p',[0,T_0]}+C\left(\|X\|_{p'}\|R^{\Delta}\|_{q_{p'},[0,T_0]}+\|\Delta'\|_{p',[0,T_0]}\|\mathbb{X}\|_{p'/2,[0,T_0]}\right)\nonumber\\
			\leq 
			\|\Delta\|_{p'}+C_{\mathbf{X}}\left(|\Delta'_0|+\|(\Delta,\Delta')\|_{p', q_{p'}, X}\right)
		}
		For the first term in \ref{chp3eq:Mtbound}, we use the Corollary \ref{chp3Cor:controlcont} for $\sigma$
		\begin{align}
		\label{chp3eq:Deltanubound}
		\|\Delta\|_{p',[0,T_0]}=\|{\sigma}(\cdot, Y_1)-{\sigma}(\cdot, Y_2)\|_{p',[0,T_0]}
		\leq C_{\sigma,M, R} \left(\|Y_1-Y_2\|_{\infty}^{\nu'}\,+\,\|Y_1-Y_2\|_{p', [0, T_0]}^{\nu'}\right).
		\end{align}
		For $\nabla\sigma$ the same corollary provides
		\begin{align*}
		\|\nabla{\sigma}(\cdot, Y_1)-\nabla{\sigma}(\cdot, Y_2)\|_{p',[0,T_0]}
		\leq C_{\nabla  \sigma, M, R}\left(\|Y_1-Y_2\|_{\infty}^{\nu'}\,+\,\|Y_1-Y_2\|_{p', [t,s]}^{\nu'}\right)
		\end{align*}
		Also by $\sigma\in Lip(\Lambda_T^d, d_{\infty})$ \begin{align*}\|\nabla\sigma(\cdot, Y_1)\|_{\infty} \leq |\nabla \sigma(0, \xi)| +C_{\sigma}\|Y_1-\xi\|_{\infty}\le C_{\xi, \sigma, R},\end{align*} 
		From previous two inequalities, we conclude
		\begin{align}
		\label{eq:Deltanu'}
		\|\Delta'\|_{p', [0,T_0]}=\|\nabla {\sigma}(\cdot, Y_1)Y'_1-\nabla {\sigma}(\cdot, Y_2)Y'_2\|_{p', [0,T_0]}
		\nonumber\\\leq \|\nabla\sigma(\cdot, Y_1)\|_{\infty}\|Y'_1-Y'_2\|_{p', [0,T_0]}
		+\|\nabla {\sigma}(\cdot, Y_1)-\nabla {\sigma}(\cdot, Y_2)\|_{p', [0,T_0]}\|Y'_2\|_{\infty}\nonumber\\
		\leq  C_{\xi, \sigma, M, R } \Big(\|Y_1-Y_2\|_{\infty}^{\nu'}\,+\,\|Y'_1-Y'_2\|_{p', [0,T_0]}\,+\, \left\|Y_1-Y_2\right\|_{p', [0,T_0]}^{{\nu'}}\Big)\nonumber
		\\
		\leq C_{\xi, \sigma, M, R }\,\Big(|(Y_1-Y_2)(0)|^{\nu'}+\left\|Y_1-Y_2,Y'_1-Y'_2\right\|^{\nu'}_{p', q_{p'}, X}\Big)
		\end{align}
		where we have used
		\[
		\|Y_1-Y_2\|_{p', [0,T_0]}\leq C_{T_0, X} \Big(|(Y_1-Y_2)(0)|+\left\|Y_1-Y_2,Y'_1-Y'_2\right\|_{p', q_{p'}, X}\Big).
		\]
		Finally, from
		\[
		R^{\Delta}_{t,s}=\mathcal{R}^{\sigma,Y_1}_{t,s}-\mathcal{R}^{\sigma,Y_2}_{t,s}+\nabla{\sigma}(t, Y_1)R_{t,s}^{Y_1}-\nabla{\sigma}(t, Y_2)R_{t,s}^{Y_2}
		\]
		we have
		\begin{align*}
		|R^{\Delta}_{t,s}|\leq |\mathcal{R}^{\sigma,Y_1}_{t,s}-\mathcal{R}^{\sigma,Y_2}_{t,s}|\,+\,|\nabla{\sigma}(t, Y_1)-\nabla{\sigma}(t, Y_2)|\,|R_{t,s}^{Y_1}|\, + \,|\nabla{\sigma}(t, Y_1)|\,|R_{t,s}^{Y_1}-R_{t,s}^{Y_2}|
		\end{align*}
		from Lipschitz continuity of $\nabla\sigma$, the inequality $\|\nabla\sigma(\cdot, Y_1)\|_{\infty} \leq C_{\xi, R}$ (obtained above) and Lemma \ref{chp3theorem:contrcont}
		\begin{align}\label{eq:RDeltab}
		\|R^{\Delta}\|_{q_{\nu'}}\leq  C_{\sigma, T_0, R}(\|Y_1-Y_2\|^{\nu'}_{\infty}+\|Y_1-Y_2\|^{\nu'}_{p', [0,T_0]})\,+\,
		C_{ \sigma,T_0} \|Y_1-Y_2\|_{\infty} \|R^{Y_1}\|_{q_{\nu'}}\,\nonumber\\
		+\,C_{\sigma}\left\|Y_1-Y_2,Y'_1-Y'_2\right\|_{p', q_{p'}, X}
		\leq C_{ \xi, \sigma, T_0, R }\,\,  \left(\left\|Y_1-Y_2,Y'_1-Y'_2\right\|^{\nu'}_{p', q_{p'}, X}\right).
		\end{align}
		Combining \eqref{eq:RDeltab} and \eqref{eq:Deltanu'} and using
		\[
		\|Y_1-Y_2\|_{\infty}+\|Y_1-Y_2\|_{p', [0,T_0]}\leq C_{T_0}\left\|Y_1-Y_2,Y'_1-Y'_2\right\|_{p', q_{p'}, X}
		\]
		we get
		\begin{align*}
		\|(\Delta,\Delta')\|_{p', q_{p'}, X}= \|\Delta'\|_{p', [0,T_0]}+ \|R^{\Delta}\|_{q_{\nu'}}\\\leq C_{\xi, \sigma,T_0, R }\quad  \Big(|(Y_1-Y_2)(0)|^{\nu'}+\left\|Y_1-Y_2,Y'_1-Y'_2\right\|^{\nu'}_{p', q_{p'}, X}\Big).
		\end{align*}
		Thus \eqref{chp3eq:Mtbound} and \eqref{chp3eq:Deltanubound} yield
		\begin{align*}
		\left\|\mathcal{M}_{T_0}(Y_1,Y_1')-\mathcal{M}_{T_0}(Y_2,Y_2')\right\|_{p', q_{p'}, X}\\
		\leq C_{\xi,\sigma,T_0, R }\quad  \Big(|(Y_1-Y_2)(0)|^{\nu'}+\left\|Y_1-Y_2,Y'_1-Y'_2\right\|^{\nu'}_{p', q_{p'}, X}\Big)\\
		\leq 
		C_{\xi,\sigma,T_0, R } \left\|Y_1-Y_2,Y'_1-Y'_2\right\|^{\nu'}_{\mathscr{D}^{p', q_{p'}}_ X}.
		\end{align*}
		hence $\mathcal{M}_{T_0}$ is continuous in $\mathscr{D}^{p', q_{p'}}_{X}$.
		
		\vskip0.5cm
		We have proved that $\mathcal{M}_{T_0}\colon \mathscr{D}^{p', q_{p'}}_{X}\to \mathscr{D}^{p', q_{p'}}_{X}$ is continuous, $\mathcal{M}_{T_0}(A)\subset A$, and $A\subset \mathscr{D}^{p', q_{p'}}_{X}$ is a compact, convex subset. Thus by Schauder  fixed point theorem, $\mathcal{M}_{T_0}$ has a fixed point $(Y,Y')\in\mathscr{D}^{p', q_{p'}}_{X}$.
		
		To conclude the proof of the theorem it remains to prove that $(Y_s,Y'_s)\in\mathscr{D}^{p,q_p}_{X}$. 
		Indeed, from the representation $Y_{t,s}=Y'_tX_{t,s}+R^Y_{t,s}$, $X\in C^{p-var}([0,T],\Rr)$ and $q_{p'}<p$ it follows that
		\[
		\|Y\|_{p, [0, T_0]}\leq \|Y'\|_{\infty}    \|X\|_{p,[0,T_]}\,+\,\|R^Y_{t,s}\|_{q_{p'},[0,T_0]}<+\infty,
		\]
		hence $Y\in C^{p-var}([0,T_0],\Rr)$. Now, using the fixed point property 
		\[
		(Y,Y')=\left(\xi_0+\int_0^{\cdot} b(t, Y)dt+\int_0^{\cdot} \Xi\cdot d\bm{X} ,\,\,\, \Xi\right),\, (\Xi,\Xi')=(\sigma(\cdot ,Y), \nabla  \sigma(\cdot ,Y) Y'),
		\]
		and Corollary \ref{chp3Cor:controlcont}, we get $Y'=\sigma(\cdot, Y)\in C^{p-var}$.
		Next, by Theorem \ref{chp02theorem:itegralcontrol}
		\[
		\left|R^Y_{t,s}\right|\nonumber 
		\leq \|b\|_{\infty}|s-t|\,+\,\|\Xi'\|_{\infty}|\mathbb{X}_{t,s}|\,+\, C\left(\|{R}^{\Xi}\|_{q_p',[t,s]}\|X\|_{p',[t,s]}+\|\Xi'\|_{p',[t,s]}\|{\mathbb{X}}\|_{p'/2,[t,s]}\right)\nonumber.
		\]
		Using that $1/p'+1/q_{p'}>1$, $p'<3$ and the properties of superadditive functions the above yields
		\[
		\left|R^Y_{t,s}\right|\leq \|\Xi'\|_{\infty}|\mathbb{X}_{t,s}|\,+\,\omega([t,s]),
		\]
		where $\omega$ is a superadditive interval function. Since $p/2< q_{p}$ and $\|X\|_{p,[0,T]}<+\infty$, we conclude that $\|R^Y\|_{q_p, [0,T_0]}<+\infty$, therefore $(Y,Y')\in\mathscr{D}^{p,q_p}_{X }.$
	\end{proof}

	\begin{remark}
		Our proofs suggest that  the results would still hold under the following regularity assumption on the coefficient $\sigma$:\\
		
		\begin{hyp}
			There exist a non-anticipative functional $\sigma'\colon\Lambda^k_T\to \mathcal{L}(\Rr^k,\mathcal{L}(\Rr^d,\Rr^k))$ such that
			\begin{itemize}
				\item $\sigma, \sigma'$ are continuous in the p-variation norm;
				there exist a modulus of continuity $\rho\colon \Rr_+\to \Rr_+,\, \rho(r)\xrightarrow[r\to 0]{} 0$:
				\begin{align*}
				\|\sigma(\cdot, Y_1)-\sigma(\cdot, Y_2)\|_{{C}^{p-var}}\leq C_{\sigma, T}\,\,\rho \left(\| Y_1- Y_2)\|_{{C}^{p-var}}\right),\\
				\|\sigma'(\cdot, Y_1)-\sigma'(\cdot, Y_2)\|_{{C}^{p-var}}\leq C_{\sigma, T}\,\,\rho \left(\| Y_1- Y_2)\|_{{C}^{p-var}} \right).
				\end{align*}
				
				\item  $R^{\sigma, \sigma'}_{t,s}(Y):=\sigma(s, Y)\,-\,\sigma(t, Y)\,-\,\sigma'(t,Y)(Y(s)-Y(t))$ satisfies
				\begin{itemize}
					\item[a)] $\|R^{\sigma, \sigma'}(Y)\|_{q_p, [t,s]}\leq C_{\sigma, T}\,\, \rho\left( \|Y\|_{\infty, [0,s]}\,+\,\|Y\|_{p, [t,s]}\right), \forall Y\in C^{p-var}([0,T], \Rr^k)$
					
					\item [b)] $\|R^{\sigma, \sigma'}(Y_1)\,-\,R^{\sigma, \sigma'}(Y_2)\|_{q_p, [t,s]}\leq C_{\sigma, T}\,\, \rho\left(\|Y_1\,-\,Y_2\|_{{C}^{p-var}}\right), \forall Y_1,\, Y_2\in C^{p-var}([0,T], \Rr^k)$
				\end{itemize}
				for some $q_{p}\in (0,1)$ with $q_{p}^{-1}+p^{-1}>1$.
			\end{itemize}
		\end{hyp}
	\end{remark}
	
	\newpage
	\section*{Appendix: {Proof of Proposition \ref{prop:compactness} }}
	In this section we present the proof of the Proposition \ref{prop:compactness}:
	\begin{proof}
		It is enough to show that any sequence $(Y_n, Y'_n)\in C([0,T], \Rr^d\times \Rr^{k\times d})$ satisfying 
		\[
		|Y_n(0)|\,+\,|Y_n'(0)|\,+\,\|Y_n, Y'_n\|_{\alpha, \beta, \rho}\leq M
		\]
		has a convergent subsequence in $\mathscr{D}^{p,q}_{X }.$ For this note that since
		\eq{\label{eq:Yineq}
			|Y_n(s)-Y_n(t)|\leq C_{M, T}\rho([s,t])^{\alpha}\nonumber\\
			|Y'_n(s)-Y'_n(t)|\leq M\rho([s,t])^{\alpha}
		}
		by Arzela-Ascoli theorem we can assume that $(Y_n, Y'_n)\to (Y, Y')\in C([0,T], \Rr^d\times \Rr^{k\times d})$ uniformly. From \eqref{eq:Yineq} and since also 
		\eq{\label{eq:RYineq}\nonumber
			|R^{Y_n}_{t,s}|\leq M\rho([s,t])^{\beta},
		}
		we conclude $\|\tilde{Y}'\|_{\alpha, \rho}\leq M,\, \|R^{{Y}}\|_{\beta, \rho}\leq M.$ 
		\vskip5pt
		Let now $\tilde{Y}_n:=Y_n-Y,\, \tilde{Y}'_n:=Y'_n-Y'$ and $\alpha':=p^{-1}<\alpha,\, \beta':=q^{-1}<\beta$. We have that $\|\tilde{Y}_n\|_{\infty}, \|\tilde{Y}_n'\|_{\infty}\to 0$ and consequently from $\|R^{{Y}}\|_{\infty}:=Y_{t,s}-Y'(t)X_{t,s}$  $$\|R^{\tilde{Y}_n}\|_{\infty}\leq C\left(\|\tilde{Y}_n\|_{\infty}\,+\,\|\tilde{Y}'_n\|_{\infty}\|X\|_{\infty}\right)\to 0.$$
		Since
		$$\|\tilde{Y}_n'\|_{\alpha, \rho}\leq \|Y_n'\|_{\alpha, \rho}+\|\tilde{Y}'\|_{\alpha, \rho}\leq 2 M,\, \|R^{\tilde{Y}_n}\|_{\beta, \rho}\leq \|R^{{Y}_n}\|_{\beta, \rho}\,+\,\|R^{{Y}}\|_{\beta, \rho}\leq 2 M,$$
		we get
		\[
		\|\tilde{Y}_n'\|_{\alpha', \rho}\leq\left(2 \|\tilde{Y}_n'\|_{\infty}\right)^{1-\frac{\alpha'}{\alpha}} \left(\|\tilde{Y}_n'\|_{\alpha, \rho}\right)^{\frac{\alpha'}{\alpha}}\leq C_M \left(\|\tilde{Y}_n'\|_{\infty}\right)^{1-\frac{\alpha'}{\alpha}}\to 0,
		\]
		\[
		\|R^{\tilde{Y}_n}\|_{\alpha', \rho}\leq\left( \|R^{\tilde{Y}_n}\|_{\infty}\right)^{1-\frac{\beta'}{\beta}} \left(\|R^{\tilde{Y}_n}\|_{\beta, \rho}\right)^{\frac{\beta'}{\beta}}\leq C_M\left( \|R^{\tilde{Y}_n}\|_{\infty}\right)^{1-\frac{\beta'}{\beta}}\to 0,
		\]
		Convergence in $\mathscr{D}^{p,q}_{X }$ follows from the inequalities
		$$\|Y'\|_{p, [0,T]}\leq \|Y'\|_{_{\alpha', \rho}}\, \rho([0,T])^{\alpha'},\, \|R^Y\|_{q, [0,T]}\leq \|R^Y\|_{_{\beta', \rho}}\, \rho([0,T])^{\beta'}.$$
	\end{proof}

	\clearpage
	\bibliographystyle{plain}
	\bibliography{references-3}

\end{document}